\documentclass[12pt]{amsart}
\usepackage[english]{babel}

\usepackage{pifont}
\usepackage{todonotes}

\usepackage{geometry}
\geometry{a4paper,top=2.5cm,bottom=2.9cm,left=2.5cm,right=2.5cm,heightrounded,bindingoffset=5mm}
\usepackage{amsthm,amsmath,amssymb,amsfonts,amscd,mathtools}
\usepackage[shortlabels]{enumitem}
\usepackage{hyperref,cleveref}
\usepackage{multirow,float}


\theoremstyle{plain}

\newtheorem{theorem}{Theorem}[section]
\newtheorem{theoremintro}{Theorem}

\newtheorem{lemma}[theorem]{Lemma}
\newtheorem{proposition}[theorem]{Proposition}

\newtheorem{corollary}[theorem]{Corollary}

\theoremstyle{definition}
\newtheorem{definition}[theorem]{Definition}
\newtheorem{remark}[theorem]{Remark}
\newtheorem{notation}[theorem]{Notation}

\theoremstyle{remark}
\newtheorem{example}[theorem]{Example}

\numberwithin{equation}{section}
\numberwithin{figure}{section}
\numberwithin{table}{figure}

\newcommand{\pt}[1]{\left({#1}\right)}

\newcommand{\rest}[2]{\left.{#1}\right|_{#2}}
\newcommand{\pg}[1]{\left\{{#1}\right\}}
\newcommand{\abs}[1]{\left|{#1}\right|}

\newcommand{\spos}[1]{\operatorname{SP}^{#1}}
\newcommand{\hpos}[1]{\operatorname{HP}^{#1}}
\newcommand{\wpos}[1]{\operatorname{WP}^{#1}}
\newcommand{\cone}{\mathcal{C}}
\newcommand{\sspos}[1]{\mathring{\operatorname{SP}^{#1}}}
\newcommand{\shpos}[1]{\mathring{\operatorname{HP}^{#1}}}
\newcommand{\swpos}[1]{\mathring{\operatorname{WP}^{#1}}}
\newcommand{\scone}{\mathring{\mathcal{C}}}

\newcommand{\sposh}[1]{\operatorname{SP}^{#1}_{\mathfrak h}}
\newcommand{\hposh}[1]{\operatorname{HP}^{#1}_{\mathfrak h}}
\newcommand{\wposh}[1]{\operatorname{WP}^{#1}_{\mathfrak h}}
\newcommand{\coneh}{\mathcal{C}_{\mathfrak h}}

\newcommand{\swposh}[1]{\mathring{\operatorname{WP}}^{#1}_{\mathfrak h}}
\newcommand{\sconeh}{\mathring{\mathcal{C}}_{\mathfrak h}}

\newcommand{\aldue}[3]{\,\alpha^{#1\bar{#2}\bar{#3}}\,}

\newcommand{\lambdahh}[2]{\Lambda^{{#1},{#2}}_{\mathfrak h}}
\newcommand{\lamhh}[3]{\Lambda^{{#2},{#3}}_{{\mathfrak h}_{#1}}}
\newcommand{\vol}{\operatorname{Vol}}

\newcommand{\R}{\mathbb{R}}

\newcommand{\C}{\mathbb{C}}

\newcommand{\rv}{\rho_{v_0}}

\DeclareMathOperator{\real}{Re}
\DeclareMathOperator{\imm}{Im}

\DeclareMathOperator{\rk}{rank}
\DeclareMathOperator{\tr}{tr}

\newcommand{\tic}{\ding{51}}
\newcommand{\no}{\ding{55}}

\setlength {\marginparwidth }{2cm}


\title[Some criteria for positive forms and applications]{Some  criteria for positive forms\\and applications}

\author{Filippo Fagioli}
\address[F. Fagioli]{Dipartimento di Matematica\\Università degli Studi di Roma Tor Vergata\\Via della Ricerca Scientifica 1\\00133 Roma, Italy}
\email{fagioli@mat.uniroma2.it}

\author{Asia Mainenti}
\address[A. Mainenti]{Dipartimento di Matematica ``G. Peano''\\Universit\`{a} degli Studi di Torino \\Via Carlo Alberto 10\\10123 Torino, Italy}
\email{asia.mainenti@unito.it}

\thanks{The first author acknowledges the MUR Excellence Department Project MatMod@TOV awarded to the Department of Mathematics, University of Rome Tor Vergata, CUP E83C23000330006.
The second author is partially supported by Project PRIN 2022 \lq \lq Geometry and Holomorphic Dynamics” and by GNSAGA (INdAM).
The first author is also member of GNSAGA (INdAM)}
\keywords{Positive $(p,p)$-forms}
\subjclass[2020]{Primary: 15A63; Secondary: 15B57, 32U40}

\begin{document}

\begin{abstract}
The aim of this paper is to gain a better understanding of weak and strong positivity for exterior forms on complex vector spaces. 
We prove a dimensionality reduction argument for positive forms, which allows us to restrict to the
case of $(2,2)$-forms in $\C^4$.
In this setting, we find criteria for weak positivity based on the associated Hermitian matrix.
As an application we prove, by duality, the strong positivity of some families of $(2,2)$-forms, already of interest in works by other authors.
\end{abstract}

\maketitle

\section{Introduction}
The theory of positive differential forms has deep roots in complex geometry, algebraic geometry and global analysis (see for instance \cite{Kod54,Lel68,Fed69}), and has been significantly influenced by the theory of Hermitian vector bundles (see \cite{Nak55,Gri69}).
In this context, the distinction between various notions of positivity for $(p,p)$-forms has played an important role, particularly in recent years.
For instance, it is now established that positively curved Hermitian metrics on holomorphic vector bundles give rise to families of positive characteristic forms.
In this setting, the notion of \textit{intermediate} positivity for forms has been studied in \cite[p.~246]{Gri69}, \cite{Li20,Fin20,Wan24}, while the concepts of \textit{strong} and \textit{weak} positivity have emerged for these characteristic forms in \cite[p.~249]{Gri69}, \cite{Gul12,DF20,Fag23} (see \cite[Sec.~3.2]{fagioli} for further details).
This highlights the motivation to look for further criteria to distinguish between different types of positivity for differential forms, which could be useful for both theoretical developments and applications.

Aside from the extremal cases of $(0,0)$ and $(n,n)$-forms, where positivity is trivially defined, $(1,1)$-forms admit a natural and well-established notion of positivity.
Namely, a real $(1,1)$-form on a complex vector space $V$ of complex dimension $n$ is called \textit{positive}, respectively \textit{strictly positive}, if its corresponding Hermitian form is positive semi-definite, respectively definite.

For intermediate values of $p$, various non-equivalent positivity concepts arise and over time the theory has led to consider essentially the three fundamental notions mentioned above: weak, intermediate, and strong.
In \cite{hk} one can find a detailed account on positivity of exterior forms on a complex vector space.
A real $(p,p)$-form $u$ is called \textit{weakly positive} if its restriction to each $p$-dimensional complex subspace $W \subset V$ gives a non-negative volume form on $W$.
If the Hermitian form on $\Lambda^{p,0} V^*$ associated to $u$ is positive semi-definite, then $u$ is called \textit{positive}. Finally, $u$ is called \textit{strongly positive} if all of its wedge products against weakly positive forms of complementary bi-degree give non-negative volume forms on $V$.
The corresponding \textit{strict} notions are defined similarly.
These conditions define cones of real forms, satisfying the following inclusions
\begin{equation*}
    \begin{cases}\begin{rcases}
        \hfil\text{weakly positive}\\
        \hfil(p,p)\text{ forms}
    \end{rcases}    \end{cases}
    \subseteq\quad
    \begin{cases}\begin{rcases}
        \hfil\text{positive}\\
        \hfil(p,p)\text{ forms}
    \end{rcases}    \end{cases}
    \subseteq\quad
    \begin{cases}\begin{rcases}
       \hfil \text{strongly positive}\\
        \hfil(p,p)\text{ forms}
    \end{rcases}    \end{cases},
\end{equation*}
and the strict ones correspond to the interior of the cones, see \cite[p.~46 and Cor.~1.10]{hk}.

If $p=1$, note that these three positivity concepts do coincide with the notion for $(1,1)$-forms mentioned above.
By duality, they coincide also if $p=n-1$.
In all other cases, weak, intermediate and strong positivity are not in general equivalent, namely the inclusions above are strict, and the study of their distinctions will be a central focus of this paper.

Intermediate {positive} forms are characterized (see \cite[Thm.~1.2]{hk}) as the forms belonging to the cone generated by \textit{elementary} forms, namely wedge products $\psi\wedge\bar\psi$, of $(p,0)$-forms $\psi$ by their conjugate.
Furthermore, this intermediate positivity condition is, in a sense, the easiest one to verify, because it can be equivalently restated in terms of positive definiteness of a Hermitian matrix.

As the name suggests, strong positivity is the most restrictive positivity notion, satisfied by powers of Hermitian metrics (\cite[Cor.~1.10]{hk}).
In this case, the cone is generated by the so called \textit{decomposable} $(p,p)$-forms, namely those forms which can be written as a wedge product $\eta_1\wedge\dots\wedge\eta_p\wedge\bar\eta_1\wedge\dots\wedge\bar\eta_p$, with $\eta_j$ a $(1,0)$-form, for all $j=1,\dots,p$.
It follows from the definition that the cone dual to that of strongly positive $(n-p,n-p)$-forms, where the duality pairing is given by the wedge product, characterizes {weakly positive $(p,p)$-forms}.
While it is not hard to construct strongly positive forms, it is quite difficult to determine whether a given real form is strongly positive, except in certain specific instances.

The situation is not better for weakly positive forms, despite this condition being quite intuitive thanks to its geometric interpretation via restrictions to suitable complex subspaces.
A further motivation to consider strictly weakly positive forms comes from the study of the so called {\it $p$-K\"ahler} manifolds.
In \cite{AA}, $p$-K\"ahler structures were introduced as closed, {\it transverse} forms, where transversality is nothing but strict weak positivity, in the notations of \cite{sullivan}.
They were further studied in \cite{AB,ABcurrents} and related works by the same authors.
One of the open problems related to this subject is the lack of examples, especially for the case of $2$-K\"ahler structures on non-K\"ahler manifolds of dimension $4$, which is the first {\it non-trivial} case.
Some obstructions to the existence of such structures can be found in \cite{AlessandriniLieGroups,fm}, so we hope that criteria for strict weak positivity will help to construct explicit examples.
This, together with further explicit constructions of $(n-2)$-K\"ahler structures, in dimension $n\ge4$, would also be useful to pursue the work initiated in \cite{fmarxiv} to construct non-K\"ahler examples of {\it Hodge-Riemann balanced structures}, see \cite{cw}.

In light of this, the main aim of this paper is to better understand the three positivity notions we discussed. 
To do so, we started with a study of criteria relating positivity on $V$ to positivity on any subspace $\mathfrak h$ of codimension $1$.
Once one fixes a complement of $\mathfrak h$ in $V$, the space of $(p,q)$-forms on $\mathfrak h$ can be identified with a subspace of $\Lambda^{p,q}V
^*$, which we denote with $\lambdahh pq$.
Then, fix any $\alpha \in \Lambda^{1,0} V^* \setminus \lambdahh10$ and for each $(p,p)$-form $\Omega$ on $V$, we can write
\begin{equation*}
    \Omega=\Xi+\alpha\wedge\eta+\bar\alpha\wedge\bar\eta+i\alpha\wedge\bar\alpha\wedge\vartheta
\end{equation*}
for some $\Xi\in\lambdahh pp$, $\eta\in\lambdahh {p-1}p$, $\vartheta\in\lambdahh {p-1}{p-1}$.
The first main result of this paper is the following, see \Cref{nonso,cono+conoincono}.
\begin{theoremintro}\label{introPos}
    If $\Omega$ has a positivity type (weak, intermediate or strong), then $\Xi$ and $\vartheta$ have the same positivity type.
    Also, if $\Omega$ is in the interior of the relative positive cone, then $\Xi$ and $\vartheta$ are in the respective interior cones.

    Moreover, if $\eta=0$, the converse of both the statements holds.
\end{theoremintro}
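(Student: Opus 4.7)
The plan is to treat the three positivity notions in parallel, exploiting the generator descriptions for the intermediate and strong cases (sums of elementary forms $\psi\wedge\bar\psi$, resp.\ sums of decomposable wedges) and the test-subspace characterization for weak positivity. I will handle the forward implications first, then the converse under the hypothesis $\eta=0$, and finally the strict (interior) statements, which in each case follow by inserting strict inequalities in the same arguments.

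For the forward direction, the claim on $\Xi$ for weak positivity is immediate: for any complex $p$-plane $W\subset\mathfrak h$ one has $\alpha|_W=0$, hence $\Omega|_W=\Xi|_W\ge 0$. For $\vartheta$, given any complex $(p-1)$-plane $W_0\subset\mathfrak h$, let $e_0\in V\setminus\mathfrak h$ satisfy $\alpha(e_0)=1$ and restrict $\Omega$ to $W=W_0\oplus\langle e_0\rangle$: the three summands $\Xi$, $\alpha\wedge\eta$ and $\bar\alpha\wedge\bar\eta$ each fail to supply the full antiholomorphic top degree on $W$ (none contains $\bar\alpha$, which is needed to accommodate $\bar e_0$), so all contribute $0$, and $\Omega|_W$ reduces to a positive multiple of $\vartheta|_{W_0}\ge 0$. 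For intermediate (resp.\ strong) positivity I write $\Omega=\sum_\ell\psi_\ell\wedge\bar\psi_\ell$ with $\psi_\ell$ arbitrary (resp.\ decomposable) $(p,0)$-forms, and split $\psi_\ell=\psi_\ell'+\alpha\wedge\phi_\ell$ along $\Lambda^{p,0}V^*=\lambdahh{p}{0}\oplus\alpha\wedge\lambdahh{p-1}{0}$. Expanding $\psi_\ell\wedge\bar\psi_\ell$ and matching it with the decomposition of $\Omega$ yields $\Xi=\sum_\ell\psi_\ell'\wedge\bar\psi_\ell'$ and $\vartheta$ equal, up to a positive real constant, to $\sum_\ell\phi_\ell\wedge\bar\phi_\ell$. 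For intermediate positivity this is the desired conclusion; for strong positivity one notes additionally that $\psi_\ell'$ is still a wedge of $(1,0)$-forms on $\mathfrak h$, while $\phi_\ell$ lies in $\Lambda^{p-1}$ of the $p$-dimensional linear span of the factors of $\psi_\ell$, hence it is automatically decomposable, since every $(p-1)$-form in a $p$-dimensional space is.

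For the converse, assume $\eta=0$, so $\Omega=\Xi+i\alpha\wedge\bar\alpha\wedge\vartheta$. To test weak positivity on an arbitrary complex $p$-plane $W\subset V$: if $W\subset\mathfrak h$ then $\Omega|_W=\Xi|_W\ge 0$; otherwise $W_0:=W\cap\mathfrak h$ has dimension $p-1$, and one completes to a basis by choosing $w_p\in W$ with $\alpha(w_p)=1$ and writing $w_p=v'+e_0$ with $v'\in\mathfrak h$. Then $\Omega|_W$ splits into the contribution of $\Xi$ to the complex $p$-plane $\operatorname{span}(W_0,v')\subset\mathfrak h$, which is nonnegative by weak positivity of $\Xi$ (and vanishes when $v'\in W_0$), plus the contribution of $i\alpha\wedge\bar\alpha\wedge\vartheta$, which again is a positive multiple of $\vartheta|_{W_0}\ge 0$. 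For intermediate (resp.\ strong) positivity, I represent $\Xi=\sum_\ell\psi_\ell'\wedge\bar\psi_\ell'$ and $\vartheta=\sum_m\xi_m\wedge\bar\xi_m$ with $\psi_\ell'\in\lambdahh{p}{0}$, $\xi_m\in\lambdahh{p-1}{0}$ arbitrary (resp.\ decomposable), and I use the identity $i\alpha\wedge\bar\alpha\wedge\xi_m\wedge\bar\xi_m=c_p\,(\alpha\wedge\xi_m)\wedge\overline{(\alpha\wedge\xi_m)}$ for a positive real constant $c_p$. This exhibits $\Omega$ as a sum of elementary (resp.\ decomposable) positive $(p,p)$-forms on $V$.

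The strict versions follow by running the same arguments with strict inequalities, in both directions: strict weak positivity of $\Omega|_W$ forces strict positivity of $\Xi|_W$ and of $\vartheta|_{W_0}$, and conversely each of the two contributions to $\Omega|_W$ identified above is strictly positive once $\Xi$ and $\vartheta$ are; in the intermediate case, restriction and pullback of a positive definite Hermitian form remain positive definite, and likewise in reverse; in the strong case, strict strong positivity is an open condition, hence preserved along the continuous decomposition. The main technical subtlety, and the step requiring most care, is the bookkeeping of the signs and $i$-factors needed to extract $\vartheta$ in a manifestly positive form: one must verify that the constant $c_p$ relating $i\alpha\wedge\bar\alpha\wedge\phi\wedge\bar\phi$ to the elementary form $(\alpha\wedge\phi)\wedge\overline{(\alpha\wedge\phi)}$ is a genuine positive real, which is exactly where the sign convention for elementary and decomposable positive $(p,p)$-forms has to be carefully applied.
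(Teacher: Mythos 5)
Your treatment of the non-strict statements is correct and takes a genuinely different route from the paper. Where the paper runs a single duality argument for all three cones (pairing $\Omega$ against $\rv\Psi$ for $\Psi$ in the dual cone on $\mathfrak h$, and observing that the $\Xi$ and $\eta$ terms die for degree reasons), you argue generator-by-generator: splitting $\psi_\ell=\psi_\ell'+\alpha\wedge\phi_\ell$ along $\Lambda^{p,0}=\rv\lambdahh p0\oplus\alpha\wedge\rv\lambdahh{p-1}0$ and noting that $\phi_\ell$, living in $\Lambda^{p-1}$ of an at most $p$-dimensional span, is automatically decomposable. This is more explicit than the paper's proof and buys a direct "diagonal" expression for $\Xi$ and $\vartheta$; the paper's duality argument buys uniformity (one proof for all three cones, strict and non-strict at once). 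One small slip: in the weak-positivity step for $\vartheta$ the reason the terms $\Xi$, $\alpha\wedge\rv\eta$, $\bar\alpha\wedge\rv\bar\eta$ vanish on $W_0\oplus\langle e_0\rangle$ is that none of them contains \emph{both} $\alpha$ and $\bar\alpha$ (the term $\bar\alpha\wedge\rv\bar\eta$ does contain $\bar\alpha$; it dies because the holomorphic slot $e_0$ must then be fed to $\rv\bar\eta$).

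The genuine gap is in the strict (interior) statements for the strong cone. The sentence "strict strong positivity is an open condition, hence preserved along the continuous decomposition" is not a proof in either direction, and in the converse direction it cannot be repaired as stated: the map $(\Xi,\vartheta)\mapsto\Xi+i\,\alpha\wedge\bar\alpha\wedge\vartheta$ is linear but \emph{not} surjective onto $\Lambda^{p,p}$ (its image is the proper subspace $\{\eta=0\}$), so its image of an open set is never open in $\Lambda^{p,p}$ and no continuity/openness consideration can place $\Omega$ in $\mathring{\spos p}$. Unlike the weak and Hermitian cases, membership in $\mathring{\spos p}$ has no pointwise or generator characterization you can just "make strict"; you must test $\Omega\wedge\phi>0$ against every nonzero $\phi$ in the dual cone $\wpos{n-p}$. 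This is exactly what the paper's Theorem 3.6 does: decompose the test form $\phi$ itself as in \eqref{restOmega}, apply the forward direction to conclude that $\rest{\phi}{\mathfrak h}$ and $\xi=\iota_{v_0}\iota_{\bar v_0}\phi$ lie in the dual cones on $\mathfrak h$, and compute $\Omega\wedge\phi=i\,\alpha\wedge\bar\alpha\wedge\pt{\Xi\wedge\xi+\vartheta\wedge\rest{\phi}{\mathfrak h}}$. (Alternatively, one can fix a Hermitian form $\omega$ on $\mathfrak h$, use that $\Xi-\varepsilon\,\rv\omega^p$ and $\vartheta-\varepsilon\,\rv\omega^{p-1}$ remain strongly positive for small $\varepsilon$, and absorb $\omega_V^p$ with $\omega_V=\rv\omega+i\alpha\wedge\bar\alpha$.) The forward strict statement for $\spos{}$ is also unproved as written, though there it is rescuable: the map $\Omega\mapsto(\Xi,\vartheta)$ \emph{is} linear and surjective, hence open, so it sends $\mathring{\spos p}$ into the interior of $\spos p_{\mathfrak h}\times\spos{p-1}_{\mathfrak h}$; you should say this (or the equivalent perturbation argument) explicitly rather than appeal to openness of the condition.
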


This dimensional reduction is telling us that information on the low dimension cases could be enough to recover information in higher dimension.
In complex dimension $3$, the cones always coincide, so we have a full understanding of the problem.
Motivated by this, we start by the first {\it non-trivial} case, namely the case of $(2,2)$-forms in complex dimension~$4$, where the problem of determining whether a given form is weakly or strongly positive remains challenging.

In general, unlike strong positivity, where a criterion for testing it is currently missing, weak positivity has an algebraic characterization in terms of Pl\"ucker equations.
Indeed, it relies on the identification of decomposable $(2,0)$-forms as the image, via the Pl\"ucker embedding, of the Grassmannian of $2$-planes in $V \cong \mathbb{C}^4$ into the projectivization of the space of $(2,0)$-forms, cf. \cite{BlockiPlis}.
The application of this characterization to the context of positive forms was originally pointed out by Dinew in an unpublished work, together with insights on how to generalize it to higher dimension, and was later revisited in \cite[Thm.~3]{BlockiPlis}.
Despite this condition being rarely easy to check, it is still extremely useful and it has been the key ingredient to prove the following, see \Cref{3antidiag} for the precise and detailed statement.
\begin{theoremintro}\label{thmBintro}
    If $(a_{jk})$ represents the $6 \times 6$ matrix associated to a given weakly positive $(2,2)$-form on $V \cong \mathbb{C}^4$ w.r.t. the standard basis induced by any basis of $V^*$, then the elements in the anti-diagonal, namely $a_{jk}$ such that $j+k=7$, satisfy the relation
\begin{equation}\label{gigettino}
\abs{a_{j,k}}\le\frac{a_{j,j}+a_{k,k}+a_{l,l}+a_{m,m}}2
\end{equation}
for $(l,m) \neq (j,k)$ with $l + m = 7$.
{Moreover, this upper bound is sharp.}
\end{theoremintro}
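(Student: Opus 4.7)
The strategy is to apply the weak positivity condition to a carefully chosen family of decomposable $(2,0)$-forms and to isolate the target entry $a_{j,k}$ via a phase-averaging argument in two complex parameters. By the Pl\"ucker characterisation recalled in the introduction, weak positivity of the $(2,2)$-form is equivalent to $\sum_{r,s}a_{rs}\psi_r\bar\psi_s\ge 0$ for every decomposable $\psi\in\Lambda^{2,0}V^*$, equivalently for every $\psi$ satisfying the Pl\"ucker relation $\psi_{12}\psi_{34}-\psi_{13}\psi_{24}+\psi_{14}\psi_{23}=0$. Using the lex-ordered basis $\{e_{12},e_{13},e_{14},e_{23},e_{24},e_{34}\}$ of $\Lambda^{2,0}V^*$, the three anti-diagonal pairs $(j,k)$ with $j+k=7$ correspond precisely to the three pairs of complementary bivectors $\{e_{12},e_{34}\}$, $\{e_{13},e_{24}\}$, $\{e_{14},e_{23}\}$ appearing in this relation.

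Fix $(j,k)$ and $(l,m)\neq(j,k)$ with $j+k=l+m=7$; the four indices $\{j,k,l,m\}$ then single out two of the three complementary pairs. By the symmetry of the problem under permutations of the basis of $V$ it is enough to treat, say, $(j,k)=(1,6)$ and $(l,m)=(3,4)$. The plan is to test weak positivity against
\[
\psi_{\lambda,\mu}=(e_1+\lambda e_3)\wedge(e_2+\mu e_4),\qquad\lambda,\mu\in\C,
\]
which is decomposable by construction and whose only non-zero coordinates lie on the four indices $\{1,3,4,6\}$, namely $(\psi_{\lambda,\mu})_1=1$, $(\psi_{\lambda,\mu})_3=\mu$, $(\psi_{\lambda,\mu})_4=-\lambda$, $(\psi_{\lambda,\mu})_6=\lambda\mu$. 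Specialising to $|\lambda|=|\mu|=1$, the expansion of $\sum_{r,s}a_{rs}(\psi_{\lambda,\mu})_r\overline{(\psi_{\lambda,\mu})_s}\ge 0$ reduces to the sum $a_{11}+a_{33}+a_{44}+a_{66}$ plus twice the real part of six cross terms involving the entries $a_{13},a_{14},a_{16},a_{34},a_{36},a_{46}$ and the monomials $\bar\mu,\bar\lambda,\bar\lambda\bar\mu,\mu\bar\lambda$.

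The key observation is the following. Parametrise $\lambda=e^{i(\theta-\alpha)}$ and $\mu=e^{i(\theta+\alpha)}$: then the coefficient $\bar\lambda\bar\mu=e^{-2i\theta}$ accompanying $a_{16}$ depends only on $\theta$, while every other monomial $\bar\mu$, $\bar\lambda$, $\mu\bar\lambda$ carries a non-trivial factor $e^{\pm i\alpha}$ or $e^{\pm 2i\alpha}$. Integrating the non-negative quantity over $\alpha\in[0,2\pi]$ therefore annihilates every cross term except the one involving $a_{16}$, leaving $a_{11}+a_{33}+a_{44}+a_{66}+2\real(e^{-2i\theta}a_{16})\ge 0$ for every $\theta\in\R$. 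Choosing $\theta$ so that $e^{-2i\theta}a_{16}=-|a_{16}|$ yields the required bound $|a_{16}|\le(a_{11}+a_{33}+a_{44}+a_{66})/2$, which is \eqref{gigettino}. The companion case $(l,m)=(2,5)$ is handled identically with the test form $(e_1+\lambda e_4)\wedge(e_2+\mu e_3)$, and the bounds for the remaining anti-diagonal pairs $(j,k)$ reduce to this one by a basis permutation of $V$.

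For sharpness I would produce, for each configuration, an explicit weakly positive matrix realising equality. A natural source is a rank-two sum $\psi_0\psi_0^*+\psi_1\psi_1^*$ of intermediate positive rank-one contributions with decomposable $\psi_0,\psi_1$: for instance, the pair $\psi_0=(e_1+e_4)\wedge(e_2+e_3)$, $\psi_1=(e_1-e_4)\wedge(e_2-e_3)$ yields a matrix with diagonal $(2,2,0,0,2,2)$ and $a_{16}=-2$, saturating the bound for $(j,k)=(1,6)$, $(l,m)=(3,4)$, while the swapped choice $\psi_0=(e_1+e_3)\wedge(e_2+e_4)$, $\psi_1=(e_1-e_3)\wedge(e_2-e_4)$ covers the configuration $(l,m)=(2,5)$; the other five cases are obtained by the same construction after a basis permutation. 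The main delicate point of the proof is selecting the parametrisation of $(\lambda,\mu)$ that isolates exactly $a_{j,k}$ after the $\alpha$-average, after which the inequality drops out almost mechanically.
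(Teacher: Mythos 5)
Your proof of the inequality is correct, but it follows a genuinely different route from the paper's. The paper (in the proof of \Cref{3antidiag}) also tests $Q_\Theta$ on explicit points of the Pl\"ucker quadric, but it sums $Q$ over a \emph{pair} of test vectors such as $(\mp1,1,0,0,z,\pm z)$, reduces to a real quadratic in $x=\real z$ and then in $y=\imm z$, and extracts the bound from two nested discriminant conditions; this first yields the stronger two--term inequalities $\abs{a_{j,k}\pm a_{l,m}}\le\tfrac12(a_{j,j}+a_{k,k}+a_{l,l}+a_{m,m})$, from which \eqref{gigettino} follows by the triangle inequality. You instead test against the two--parameter family of manifestly decomposable forms $(e_1+\lambda e_3)\wedge(e_2+\mu e_4)$ with $\abs\lambda=\abs\mu=1$ and average over the phase $\alpha$ in $\lambda=e^{i(\theta-\alpha)}$, $\mu=e^{i(\theta+\alpha)}$, which kills every cross term except the one carrying $a_{1,6}$ and then optimises over $\theta$. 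I checked the monomial bookkeeping ($a_{1,3},a_{4,6}\mapsto\bar\mu$; $a_{1,4},a_{3,6}\mapsto\bar\lambda$; $a_{3,4}\mapsto\mu\bar\lambda$; $a_{1,6}\mapsto\bar\lambda\bar\mu$) and it is right, so the averaging isolates exactly the intended entry; the reduction of the other antidiagonal pairs to this one by permuting the basis of $V^*$ is legitimate because weak positivity is basis--independent and the induced permutation only changes off--diagonal entries by unimodular factors. Your argument is arguably cleaner and, if you keep $\alpha$ as a second free phase instead of integrating it out, it even gives $\abs{a_{j,k}}+\abs{a_{l,m}}\le\tfrac12(a_{j,j}+a_{k,k}+a_{l,l}+a_{m,m})$, which is slightly stronger than the paper's intermediate estimate.

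The one point where your proposal falls short of the paper's intent is sharpness. Your example $\psi_0\psi_0^*+\psi_1\psi_1^*$ is weakly (indeed Hermitian) positive and does attain equality in \eqref{gigettino}, but only degenerately: there $a_{3,3}=a_{4,4}=0$, so equality also holds in the two--term bound $\abs{a_{j,k}}\le\tfrac12(a_{j,j}+a_{k,k})$ of \eqref{nondiag}. The paper's point (see \Cref{rmkprefe} and the closing remark of \Cref{secSpos}) is that for antidiagonal entries the four--term bound cannot be improved to the two--term one valid off the antidiagonal; this is witnessed by $\Omega_a$ with $\abs a=2$, which is weakly positive, \emph{not} Hermitian positive, has all diagonal entries equal to $1$ and $\abs{a_{1,6}}=2>\tfrac12(a_{1,1}+a_{6,6})$. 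If ``sharp'' is read merely as ``equality is attained'', your example suffices; to capture the intended, stronger meaning you should replace it with (or add) an example of this latter kind.
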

The standard basis in the claim is defined by \eqref{base20}.

\smallskip
As an application, we achieved to carry on the study initiated in \cite{BlockiPlis}, where the authors exhibit a family of $(2,2)$-forms on $\C^4$, depending on a complex parameter, and determine for which values of the parameter the given form is weakly positive.
This family is of particular interest, as it contains two weakly positive forms whose wedge product is not positive.
We extend \cite[Prop.~4]{BlockiPlis} in \Cref{alphaa}, with a characterization of strict weak positivity, strong positivity and strict strong positivity for that family of forms, as the parameter varies.

We want to emphasize that in the last part of the paper we used the inequality~\eqref{gigettino} concerning weakly positive forms to establish, {via} duality of cones, the strong positivity of certain explicit $(2,2)$-forms.
This approach turned out to be significantly simpler than verifying strong positivity by searching for a \textit{diagonal} expression in terms of decomposable forms.
In light of this, we believe that \Cref{thmBintro} could help, by duality, with the understanding of strong positivity.
 Moreover, together with \Cref{introPos}, this could serve as a first step towards the development of positivity criteria in complex dimension~$5$ and higher.

\bigskip
The organization of the paper is as follows. 

\medskip
After reviewing, in \Cref{secPos}, the basic notions on positivity for $(p,p)$-forms, in \Cref{chPos} we focus on the relation of positivity on a complex vector space $V$ with positivity on complex hyperplanes.
More precisely, the cones of positive forms on the subspace are immersed in the respective cones of positive forms on $V$, see \Cref{coneshp,coneswedgeh}.
This, together with an inverse construction, provides the proof of \Cref{introPos}.

In \Cref{wp224} we discuss the case of $(2,2)$-forms in complex dimension $4$.
We provide necessary conditions for weak positivity, see \Cref{+diag,3x3subm} and \Cref{3antidiag}.
To conclude, we make use of such results to discuss strong positivity on a family of forms, including that of \cite[Prop.~4]{BlockiPlis}, see \Cref{wposb} and \Cref{alphaa}.

\section{Preliminaries}\label{secPos}

We will now discuss some positivity notions on exterior forms on a complex vector space.
The primary reference we consider is \cite{hk}, from which we will adopt the majority of the  notations.
In literature, several authors considered the same positivity conditions, using different notations and terminology, see for instance \cite[Sec.~III.1]{demailly} and \cite[Rmk.~1.3]{fagioli}.

Let $V$ be a complex vector space of dimension $n$ and denote with $\Lambda^{p,q}\coloneqq\Lambda^{p,q}V^*$ the space of $(p,q)$-forms over $V$.
From here on, when talking about vector spaces we will always intend them to be complex vector spaces, unless differently specified.
Similarly, the dimension of a complex vector space will be the complex dimension.

\begin{definition}\label{elemForms}
A $(k,0)$-form $\psi$  on $V$  is called \textit{decomposable} if it can be written as $\psi=\mu_1\wedge\dots\wedge\mu_k$, with $\mu_1,\dots,\mu_k\in\Lambda^{1,0}$.
A $(k,k)$-form $\Phi$  on $V$ is said to be \textit{decomposable} if it can be written as $\Phi=\psi\wedge\bar\psi$, with $\psi$ a decomposable $(k,0)$-form.
\end{definition}
All $(k,0)$-forms, for $k=1,n$, are trivially decomposable, and actually the same holds for $k=n-1$, by duality.

\begin{remark}\label{decomPlucker}
Let $\mathbb{G}_k(V)$ denote the Grassmannian of $k$-dimensional subspaces of $V$.
It is well-known, see for instance \cite{harris}, that the set of decomposable $k$-forms on $V$ can be identified, modulo $\C^*$, with the image of $\mathbb{G}_k(V)$ in $\mathbb{P}(\Lambda^k V)$ via the Pl\"ucker embedding.
This gives us a characterization of decomposable forms in terms of their coefficients in a suitable basis, for which they have to satisfy the so called {Pl\"ucker equations}.
Clearly, the same holds for $(k,0)$-forms.
\end{remark}

Recall that a $(p,q)$-form is called \textit{real} if it equals its complex conjugate.
In this case, it clearly follows that $p=q$.

In order to introduce positivity on exterior forms, we need to fix a \textit{volume form} on $V$, that is a  real, non-zero, $(n,n)$-form on $V$, which we will denote by $\vol$.
For instance, a natural way to choose a volume form is to fix a basis $\pg{\alpha^j}_{j=1}^n$ of $(1,0)$-forms on $V$ and define 
\begin{equation*}
    \vol\coloneqq i\,\alpha^1\wedge\alpha^{\bar1}\wedge\cdots\wedge i\,\alpha^n\wedge\alpha^{\bar{n}}=i^{n^2}\alpha^{1\cdots n\bar1\cdots\bar n},
\end{equation*}
where we are using the notation $\alpha^{j_1\cdots j_k}\coloneqq\alpha^{j_1}\wedge\dots\wedge\alpha^{j_k}.$
Thus, every $(n,n)$-form on $V$ is a complex multiple of $\vol$, and real $(n,n)$-forms are multiples of $\vol$ by a real multiplicative constant.
This gives rise to the notion of positivity for  $(n,n)$-forms.
\begin{definition}\label{posnn}
A real $(n,n)$-form $\nu$ is \textit{positive} if it is a {non-negative} multiple of $\vol$, and $\nu$ is \textit{strictly positive} if the multiplicative constant is positive.
\end{definition}

In the setting of complex manifolds of dimension $n$, where by $V$ one considers  the tangent space to the manifold at every fixed point, a standard choice of a volume form is the top power of the fundamental $(1,1)$-form associated to some Hermitian metric.
Under this choice it is easy to see that the $n$th power of any other Hermitian metric is strictly positive.

\smallskip
With this in mind, we will now see a  first positivity condition on $(p,p)$-forms, with $1\le p\le n$, which  is one way to generalize \Cref{posnn}.

\begin{definition}\label{SPos}
    A $(p,p)$-form $\Omega$ is \textit{strongly positive} if it can be written as a sum of decomposable $(p,p)$-forms,  namely
    \begin{equation*}
        \Omega=i^{p^2}\sum_j\psi_j\wedge\bar{\psi}_j, \quad\psi_j\in\Lambda^{p,0}\mbox{ decomposable.}
    \end{equation*}
Strongly positive forms of bidegree $(p,p)$ define a cone, that we will denote with $\spos p$.
A form belonging to the interior of this cone will be called \textit{strictly strongly positive}.
\end{definition}
Once again, for the case of complex manifolds, this is particularly significative in relation to Hermitian metrics, as $p$th powers of Hermitian metrics on complex manifolds are strictly strongly positive, for $0\le p\le n$.

Unlike the case of $(n,n)$-forms, for $p<n$, we can define a few different conditions of positivity for real forms of bidegree $(p,p)$.
However, the extremal values $p=1,n-1$ behave similarly to $p=n$, as all of them are equivalent to \Cref{SPos}.

\begin{definition}\label{defPos}
Let $\Omega$ be a real $(p,p)$-form on $V$.
$\Omega$ is said to be \textit{positive}, or \textit{Hermitian positive}, if the $(n,n)$-form 
\begin{equation*}
     i^{\pt{n-p}^2}\,\Omega\wedge\eta\wedge\bar\eta
\end{equation*}
is positive,  for all $\eta\in\Lambda^{n-p,0}$.
If for all such $\eta$, $ i^{\pt{n-p}^2}\,\Omega\wedge\eta\wedge\bar\eta$ is a volume form, $\Omega$ will be called \textit{strictly positive}, or \textit{strictly Hermitian positive}.

If for every non-zero, decomposable $(n-p,0)$-form $\psi$
\begin{equation*}
     i^{\pt{n-p}^2}\,\Omega\wedge\psi\wedge\bar\psi\in\Lambda^{n,n}
\end{equation*}
is positive, $\Omega$ is said \textit{weakly positive}.
If moreover $ i^{\pt{n-p}^2}\,\Omega\wedge\psi\wedge\bar\psi$ is strictly positive for all such $\psi$, $\Omega$ is \textit{strictly weakly positive}. 
\end{definition}
To avoid confusion, we refer to positive and strictly positive forms as to Hermitian positive and strictly  Hermitian positive, following the notation of \cite{fagioli}.
We will see in what follows that this terminology is justified by the fact that this type of positivity is equivalent to the positive definiteness of some Hermitian form.

\begin{remark}\label{wposSsp}
    The last condition in \Cref{defPos} was studied in \cite{hk} as strict weak positivity, and independently by Sullivan, in \cite{sullivan}, as transversality with respect to the cone of the complex structure. 
    
    Strictly weakly positive forms are characterized by the condition of being strictly positive volume forms, when restricted to vector spaces of dimension $p$.
    Similarly, a $(p,p)$-form is weakly positive if and only if its restriction to every such subspace is positive, in the sense of \Cref{posnn}.
\end{remark}

Just like for strong positivity, Hermitian positive forms, respectively weakly positive forms, define a cone, denoted $\hpos p$, respectively $\wpos p$.
Moreover, it follows from the definition that the cone of weakly positive forms $\wpos p$ is the dual cone to that of strongly positive forms $\spos{n-p} $, while the cone of Hermitian positive forms is self-dual, in the sense that the dual cone to $\hpos p$ is $\hpos{n-p}$.
In particular, as one can see from this last condition, the cone structure of Hermitian positive $(p,p)$-forms is generated by the set of elementary forms $\pg{\eta\wedge\bar\eta,\,\eta\in\Lambda^{p,0}}$.
As for the strict conditions, they correspond to the interior of each respective cone.
It is straightforward that the following inclusions hold
\begin{equation*}
    \wpos p\subset\hpos p\subset\spos p,
\end{equation*}
and that the inclusions are strict, for $1<p<n-1$, whereas for the extremal values $p=1,n-1$, all the inclusions turn out to be equalities.
Of course, the situation is the same for the interior cones.

Given $\Omega\in\Lambda^{p,p}$ real, one can consider the Hermitian form $Q_\Omega$ on $\Lambda^{n-p,0}$ defined by
\begin{equation*}
     i^{\pt{n-p}^2}\,{\Omega\wedge\beta\wedge\bar\gamma}=Q_\Omega(\beta,\gamma)\text{Vol},
    \quad\beta,\gamma\in\Lambda^{n-p,0}
\end{equation*}
Note that $Q_\Omega(\beta,\gamma)=\star\, i^{\pt{n-p}^2}\,\Omega\wedge\beta\wedge\bar\gamma$, where $\star$ is the Hodge star operator with respect to any Hermitian metric with associated volume form $\text{Vol}$.

\begin{remark}\label{posQ}
    Some of the positivity conditions above can be expressed in terms of  positive definiteness of the associated Hermitian form $Q_{\Omega}$, when restricted to suitable subspaces of  $\Lambda^{n-p,0}$:
    \begin{enumerate}[label=\roman*.]
    \item
    $\Omega$ is {weakly positive} if and only if the restriction of $Q_{\Omega}$ to decomposable forms is positive semi-definite.
    \item
    $\Omega$ is {strictly weakly positive} if and only if the restriction of $Q_{\Omega}$ to decomposable forms is positive definite.
    \item $\Omega$ is {Hermitian positive} if and only if $Q_{\Omega}$ is positive semi-definite on $\Lambda^{n-p,0}$.
    \item\label{shposEigen} $\Omega$ is {strictly Hermitian positive} if and only if $Q_{\Omega}$ is positive definite on $\Lambda^{n-p,0}$.
\end{enumerate}
\end{remark}

Hermitian and strict Hermitian positivity can also be restated in terms of the so called {\it eigenvalues} of a real form.
\begin{proposition}[{\cite[Thm.~1.2]{hk}}]\label{hkeigenvalues}
    Any real $(p,p)$-form $\Omega$ can be written as 
\begin{equation*}
    \Omega= i^{\pt{n-p}^2}\sum_j\lambda_j\,\psi^j\wedge\psi^{\bar j},
\end{equation*}
for a suitable normalized orthogonal basis $\pg{\psi^1,\dots,\psi^N}$ of $\Lambda^{p,0}$ and $\lambda_j\in\R$ are the eigenvalues of $\Omega$.
For a fixed normalization of the orthogonal basis, 
the $\lambda_j$ and their multiplicity are unique, and so are the eigenspaces $V_\lambda\coloneqq\text{span}\pg{\psi^j\colon\lambda_j=\lambda}$.
Moreover, the number of eigenvalues that are positive, negative or null is independent of the choice of inner product.
\end{proposition}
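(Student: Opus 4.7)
The plan is to recognise the statement as the classical spectral theorem applied to a Hermitian operator canonically attached to $\Omega$, and then to deduce the invariance of the signature from Sylvester's law of inertia. The dependence on the inner product appears only through the associated volume form $\vol$ and through the notion of orthonormality, both of which interact well with the diagonalisation procedure.

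First, fix a Hermitian inner product on $V^*$. This induces inner products on all the spaces $\Lambda^{p,0}$ and singles out a volume form $\vol$. Choose a unitary basis $\pg{\alpha^{1},\dots,\alpha^{n}}$ of $\Lambda^{1,0}$; then the multi-index basis $\pg{\alpha^{I}}_{\abs{I}=p}$ is a unitary basis of $\Lambda^{p,0}$, of cardinality $N=\binom{n}{p}$, and one can expand
\begin{equation*}
    \Omega = i^{p^{2}}\sum_{\abs{I}=\abs{J}=p} a_{IJ}\,\alpha^{I}\wedge\alpha^{\bar J}.
\end{equation*}
A direct check using $\overline{i^{p^{2}}\alpha^{I}\wedge\alpha^{\bar J}}=i^{p^{2}}\alpha^{J}\wedge\alpha^{\bar I}$ shows that the reality of $\Omega$ is equivalent to $\overline{a_{IJ}}=a_{JI}$, i.e.\ to the matrix $A=(a_{IJ})$ being Hermitian. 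Equivalently, $\Omega$ determines the Hermitian sesquilinear form $H_\Omega$ on $\Lambda^{p,0}$ whose matrix in the basis $\pg{\alpha^I}$ is $A$.

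Next, apply the spectral theorem to $A$: there exists a unitary $N\times N$ matrix $U=(u_{Ij})$ such that $U^{*}AU=\diag(\lambda_{1},\dots,\lambda_{N})$ with $\lambda_{j}\in\R$. Setting $\psi^{j}\coloneqq\sum_{I}u_{Ij}\,\alpha^{I}$ produces a new unitary basis of $\Lambda^{p,0}$, in which
\begin{equation*}
    \Omega = i^{p^{2}}\sum_{j=1}^{N}\lambda_{j}\,\psi^{j}\wedge\psi^{\bar{j}}.
\end{equation*}
Uniqueness of the $\lambda_{j}$ (with multiplicity) and of the eigenspaces $V_{\lambda}=\text{span}\pg{\psi^{j}\colon\lambda_{j}=\lambda}$ is the standard uniqueness part of the spectral theorem for Hermitian operators.

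For the invariance of the signature, observe that passing to a different Hermitian inner product changes the unitary basis of $\Lambda^{p,0}$ by some invertible $P$ and rescales $\vol$ by a positive constant; the matrix $A$ representing $\Omega$ is therefore replaced by a matrix congruent to $A$ up to a positive scalar, namely $c\,P^{*}AP$ with $c>0$. By Sylvester's law of inertia, the number of positive, negative and zero eigenvalues is preserved under such a transformation, which gives the last claim. The main obstacle in executing this plan is the bookkeeping required to verify that reality of $\Omega$ is equivalent to Hermiticity of $A$ in the chosen normalisation, and that a change of inner product really acts by a positive-scalar congruence; once this is in place, the result reduces to standard linear algebra.
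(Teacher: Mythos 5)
The paper offers no proof of this statement: it is imported verbatim as \cite[Thm.~1.2]{hk}, so there is nothing internal to compare against. Your argument --- reality of $\Omega$ in a unitary multi-index basis is equivalent to Hermiticity of the coefficient matrix $A$, then the spectral theorem gives the diagonalizing unitary basis $\psi^j=\sum_I u_{Ij}\alpha^I$ and the uniqueness of eigenvalues/eigenspaces, and Sylvester's law of inertia gives the invariance of the signature under the $*$-congruence induced by a change of inner product --- is correct and is essentially the standard proof of the Harvey--Knapp theorem. One minor remark: your normalization $i^{p^2}$ is the one that makes each term $\psi^j\wedge\psi^{\bar j}$ real; the exponent $(n-p)^2$ in the statement as printed appears to be a slip carried over from the definition of $Q_\Omega$, which pairs against $(n-p,0)$-forms.
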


The eigenvalues of $\Omega$ are exactly the eigenvalues of $Q_\Omega$, and the $Q_\Omega$-eigenspace $W_\lambda$ corresponding to the eigenvalue $\lambda$ is the dual of $V_\lambda$, via the Hodge star operator.
As a consequence, we find the following relations between the positivity of $\Omega$ and the positivity of its eigenvalues.

\begin{remark}\label{rmkboh}
    A real $(p,p)$-form $\Omega$ is Hermitian positive if and only if $\lambda_j\geq0$, for all $j$, and it is strictly Hermitian positive if and only if $\lambda_j>0$, for all $j$.
\end{remark}
Note that the weaker positivity conditions do not give restrictions on the positivity of the eigenvalues.
Weakly positive forms can in fact admit both positive and negative eigenvalues, as the following example shows.

\begin{example}[\cite{BlockiPlis}]
Let $\pg{\alpha^1,\dots,\alpha^4}$ be a basis of $\Lambda^{1,0}\C^4$, and consider the real $(2,2)$-form 
\begin{equation*}
    \Omega_a=\sum_{1\le j<k\le4}\alpha^{jk\bar j\bar k}+a\,\pt{\alpha^{12\bar3\bar4}+\alpha^{34\bar1\bar2}},
\end{equation*}
with $a\in\R$.
By \cite[Prop.~4]{BlockiPlis}, we know that $\Omega$ is weakly positive for $\abs{a}\le2$.
A simple computation shows that one can write
\begin{equation*}
\begin{aligned}
    \Omega_a=&\alpha^{13}\wedge\aldue{}13+\alpha^{14}\wedge\aldue{}14+\alpha^{23}\wedge\aldue{}23+\alpha^{24}\wedge\aldue{}24\\
   & +\frac{1+a}{2}\pt{\alpha^{12}+\alpha^{34}}\wedge\pt{\aldue{}12+\aldue{}34}
    +\frac{1-a}{2}\pt{\alpha^{12}-\alpha^{34}}\wedge\pt{\aldue{}12-\aldue{}34},
\end{aligned}\end{equation*}
so for $1<|a|\le2$, $\Omega_a$ is a weakly positive form, with one negative eigenvalue.
\end{example}

The number of non-null eigenvalues of a real $(p,p)$-form is called \textit{rank}, and it is at most $N=\dim\Lambda^{p,0}$.
One can characterize the condition of being in the interior of the cone $\hpos p$ in terms of the rank, in the sense that strictly Hermitian positive forms are Hermitian positive forms of maximal rank, as follows by \cref{shposEigen} in \Cref{posQ} and \Cref{rmkboh}. 

\section{Dimensionality reduction}\label{chPos}

We now compare positivity notions on a vector space to the respective notions on a subspace of   codimension $1$.
Let $V$ be a vector space of dimension $n$, and fix a subspace $\mathfrak h$ of  dimension $n-1$.

\subsection{Positivity conditions, from \texorpdfstring{$\mathfrak h$}{h} to \texorpdfstring{$V$}{V}}\label{htoV}

Consider the cones of positive forms on $V$, $\spos p,\hpos p,\wpos p\subset\Lambda^{p,p}$.
We can interpret the analogous cones $\spos p_\mathfrak h,\hpos p_\mathfrak h,\wpos p_\mathfrak h$ on $\mathfrak h$ as follows.
The cone $\spos{p}_{\mathfrak h}$ of strongly positive $(p,p)$-forms on $\mathfrak h$ is defined as the cone generated by decomposable $(p,p)$-forms on $\mathfrak h$. 
Similarly, we define the cone of Hermitian positive $(p,p)$-forms on $\mathfrak h$, denoted by $\hpos p_{\mathfrak h}$, as the cone with generating set 
\begin{equation*}
   \mathcal{P}_\mathfrak h= \pg{i^{p^2}\varphi\wedge\bar\varphi,\,\varphi\in\lambdahh p0}.
\end{equation*}
Lastly, weakly positive forms on $\mathfrak h$ are defined by duality with respect to strongly positive forms, in the following sense.
For a fixed volume form $\vol _\mathfrak h$ on $\mathfrak h$, we define
\begin{equation*}
    \wposh p\coloneqq\pg{\Omega\in\lambdahh pp, \, \Omega\wedge\psi=c\vol_\mathfrak h,\,c\ge0,\,\forall\,\psi\in\sposh{(n-1)-p}},
\end{equation*}
where $\dim_\C\mathfrak h=n-1$.
Similarly to \Cref{secPos}, the strict conditions are equivalent to be in the interior of the cones.
For instance,
\begin{equation*}
    \swposh p\coloneqq\pg{\Omega\in\lambdahh pp, \, \Omega\wedge\psi=c\vol_\mathfrak h,\,c>0,\,\forall \,0\neq\psi\in\sposh{(n-1)-p}}.
\end{equation*}
Once again, strictly strongly positive and strictly Hermitian positive forms have maximal rank, as real $(p,p)$-forms on $\mathfrak h$.
Note that the maximal possible rank for a real $(p,p)$-form on $\mathfrak h$ equals the dimension of $\lambdahh p0$, namely $\binom{n-1}{p}$.
We recall that weak positivity has a geometric interpretation in terms of complex subspaces, namely a $(p,p)$-form on $\mathfrak h$ is weakly positive, respectively strictly weakly positive, if and only if its restriction to any complex subspace of $\mathfrak h$ of  dimension $p$ is a non-negative, respectively positive, volume forms on said  subspace.

\smallskip
To relate positivity on $V$ and positivity on $\mathfrak h$, we have to fix a complement of $\mathfrak h$ in $V$, namely a preferred $v_0\in V\setminus\mathfrak h$. 
Thus, for all $p,q=1,\dots,n-1$, $v_0$ induces an immersion $\rho_{v_0}$ of the space of $(p,q)$-forms on $\mathfrak h$ in the space of $(p,q)$-forms on $V$, defining, for all $\eta\in\lambdahh pq$, $\rho_{v_0}\eta\in\Lambda^{p,q}$ to be the extension of $\eta$ vanishing in $v_0$ and $\bar v_0$.
In other words, for $v_1,\dots,v_{p+q}\in V$, we can write $v_j=c_jv_0+v_j^{\mathfrak h}$, for unique $c_j\in\C$ and $v_j^{\mathfrak h}\in\mathfrak h$, and
\begin{equation}\label{lahk}
     \rho_{v_0}\eta\pt{v_1,\dots,v_p,\bar v_{p+1},\dots,\bar v_{p+q}}\coloneqq\eta\pt{v_1^{\mathfrak h},\dots,v_p^{\mathfrak h},\bar v_{p+1}^{\mathfrak h},\dots,\bar v_{p+q}^{\mathfrak h}}.
\end{equation}
Conversely, given $\mu\in\Lambda^{p,q}$, we will say that $\eta$ is in $\rho_{v_0}\pt{\lambdahh pq}$ if and only if the contraction $\iota_{v_0}\iota_{\bar v_0}\mu=0$.
Note also that the inverse of $\rv\colon\lambdahh pq\to\rv\Lambda^{p,q}$ is the restriction to $\mathfrak h$, denoted with $\rest{\cdot}{\mathfrak h}$.

The upshot is that the map $\rho_{v_0}$ allows to identify $\lambdahh pq$ with the subspace  $\rho_{v_0}\pt{\lambdahh pq}$ of $\Lambda^{p,q}$.
 In particular, the cones of positive $(p,p)$-forms on $\mathfrak h$ are immersed in $\Lambda^{p,p}$, so one can investigate the relation between such cones and those of positive forms on $V$.

\begin{lemma}\label{coneshp}
    For all $p=1,\dots,n-1$, and $\cone=\spos{},\hpos{},\wpos{}$, and for every $v_0\in V\setminus\mathfrak h$,
    \begin{equation*}
        \rho_{v_0}\pt{\coneh^p}\subset\cone^p\setminus\mathring{\cone}^p.
    \end{equation*}
\end{lemma}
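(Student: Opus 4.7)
The plan is to prove the two halves of the statement separately: first the inclusion $\rho_{v_0}(\coneh^p)\subset\cone^p$, and then the disjointness $\rho_{v_0}(\coneh^p)\cap\mathring{\cone}^p=\emptyset$.

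For the inclusion, the cases $\cone=\spos{}$ and $\cone=\hpos{}$ are formal. Restricted to $(1,0)$-forms, $\rho_{v_0}$ is the natural injection $\mathfrak h^*\hookrightarrow V^*$ dual to the projection $\pi\colon V\to\mathfrak h$ with kernel $\C v_0$, so it commutes with wedge products and preserves decomposability. Hence it sends the generating set of $\sposh p$ (respectively $\hposh p$) into that of $\spos p$ (respectively $\hpos p$). For $\cone=\wpos{}$ I would use the geometric characterization in \Cref{wposSsp}: one has to check that $\rho_{v_0}\Omega|_W$ is a non-negative volume form on every complex $p$-dimensional subspace $W\subset V$. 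If $v_0\in W$, then \eqref{lahk} forces $\rho_{v_0}\Omega|_W=0$. Otherwise $\pi|_W\colon W\to W':=\pi(W)\subset\mathfrak h$ is a $\C$-linear isomorphism of complex $p$-dimensional spaces and \eqref{lahk} yields $\rho_{v_0}\Omega|_W=(\pi|_W)^*\Omega|_{W'}$; since pulling back a $(p,p)$-form through a $\C$-linear isomorphism of complex $p$-dimensional spaces scales it by $|\det|^2\ge0$, and $\Omega|_{W'}$ is non-negative by weak positivity of $\Omega$ on $\mathfrak h$, so is $\rho_{v_0}\Omega|_W$.

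For the second part, I would exhibit a single non-zero test form $\phi$ that kills $\rho_{v_0}\Omega$ in the relevant pairing for all three cones at once. Pick any non-zero decomposable $\psi'\in\lambdahh{n-p}0$ (which exists since $n-p\le n-1=\dim\mathfrak h$) and set
\begin{equation*}
\phi:=i^{(n-p)^2}\rho_{v_0}\psi'\wedge\overline{\rho_{v_0}\psi'}.
\end{equation*}
Then $\phi$ is a non-zero decomposable $(n-p,n-p)$-form on $V$, and a direct check shows that it belongs to each of $\spos{n-p}$, $\hpos{n-p}$, and $\wpos{n-p}$, that is, to the dual cone of each of $\spos p$, $\hpos p$, $\wpos p$. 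On the other hand, since $\rho_{v_0}$ commutes with wedge products and $\lambdahh nn=\{0\}$ as $\dim\mathfrak h=n-1$,
\begin{equation*}
\rho_{v_0}\Omega\wedge\phi=\rho_{v_0}\pt{i^{(n-p)^2}\Omega\wedge\psi'\wedge\bar\psi'}=0.
\end{equation*}
The interior of each of the three closed pointed cones in the real vector space $\Lambda^{p,p}$ is characterized by pairing strictly positively against every non-zero element of its dual, so $\rho_{v_0}\Omega\notin\mathring{\cone}^p$ in all three cases.

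The main obstacle I anticipate is the weak-positivity case of the first part, namely the careful identification of $\rho_{v_0}\Omega|_W$ with the pullback $(\pi|_W)^*\Omega|_{\pi(W)}$ and the verification that this pullback stays non-negative, a fact that ultimately reduces to how $\C$-linear isomorphisms scale top $(p,p)$-forms by the squared modulus of the complex determinant. The remaining steps, including the duality trick for the second part, should then be essentially formal.
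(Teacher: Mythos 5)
Your proposal is correct. The first half (the inclusion $\rv(\coneh^p)\subset\cone^p$) is essentially the paper's argument: generators go to generators for $\spos{}$ and $\hpos{}$, and the restriction-to-$p$-planes characterization handles $\wpos{}$ (your case split on whether $v_0\in W$, with the pullback-by-$\pi|_W$ identification, is just a more explicit version of what the paper leaves as "clear"). The second half is where you genuinely diverge. The paper treats the three cones non-uniformly: for $\spos{}$ and $\hpos{}$ it argues that interior forms have maximal rank $\binom{n}{p}$, whereas forms coming from $\mathfrak h$ have rank at most $\binom{n-1}{p}$; for $\wpos{}$ it restricts to $W_0=W\oplus\C v_0$ with $W\subset\mathfrak h$ of dimension $p-1$ and observes that the restriction vanishes. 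You instead give a single duality argument for all three cones at once: the decomposable form $\phi=i^{(n-p)^2}\rv\psi'\wedge\overline{\rv\psi'}$ lies in $\spos{n-p}\subseteq\hpos{n-p}\subseteq\wpos{n-p}$, hence in each relevant dual cone, yet pairs to zero with anything in $\rv\pt{\lambdahh pp}$ because the product lands in $\rv\Lambda^{n,n}_{\mathfrak h}=0$. This is sound (only the easy direction of the duality characterization of the interior is needed, and the wedge pairing is non-degenerate), it is more uniform than the paper's treatment, and it is in fact the same trick the paper itself uses one lemma later, in the proof of \Cref{coneswedgeh}, where $\phi_0=i\,\alpha\wedge\bar\alpha\wedge\gamma$ plays the role of your $\phi$. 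What the paper's rank argument buys in exchange is a slightly more quantitative statement (an explicit bound on how far from the interior these forms are, via the rank drop), but for the lemma as stated your route is equally valid and arguably cleaner.
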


 \begin{proof}
 Firstly, we can see that the inclusion holds for the cone $\hposh p$ of Hermitian positive forms on $\mathfrak h$, for all $p=1,\dots,n-1$, because we can write explicitly 
 \begin{equation*}
     \hposh p=\pg{i^{p^2}\sum_{j}\varphi_j\wedge\bar\varphi_j,\,\varphi_j\in\lambdahh p0},
 \end{equation*}
 and see that every Hermitian positive $(p,p)$-form on $\mathfrak h$ is mapped via $\rv$ to a  Hermitian positive $(p,p)$-form on $V$, by linearity of $\rho_{v_0}$.
 We can see similarly that $\rv\sposh p\subset\spos p$, for all $p=1,\dots,n-1$, once again by linearity of $\rv$, because
 \begin{equation*}
     \sposh p=\pg{i^{p^2}\sum_{j}\varphi_j\wedge\bar\varphi_j,\,\varphi_j\in\lambdahh p0, \text{ decomposable}}.
 \end{equation*}
The fact that
\begin{equation*}
     \rv\pt{\spos p_\mathfrak h}\subset\spos p\setminus\mathring{\spos p},\quad
     \rv\pt{\hpos p_\mathfrak h}\subset\hpos p\setminus\shpos p,
\end{equation*}
follows by a simple argument on the rank. 
Indeed, we recall that for these two cones, being in the interior implies being of maximal rank, but the maximal rank for $(p,p)$-forms on $\mathfrak h$ is strictly less than the maximal rank for $(p,p)$-forms on $V$.

The case $\cone=\wpos{}$ is a direct consequence of \eqref{lahk}, using the characterization in \Cref{wposSsp}.
It is clear that for $\eta\in\wposh p$, $\rv\eta$ is non-negative when restricted to complex vector spaces of $V$ of complex dimension $p$.
As for the strict condition, consider any $W$, complex subspace of $\mathfrak h$ of dimension $p-1$, and $W_0\coloneqq W\oplus v_0$.
A simple computation shows that $\rest{\rv\eta}{W_0}=0$, for all $\eta\in\lambdahh pp$.
 \end{proof}

We just saw how, for every fixed $v_0\in V\setminus\mathfrak h$, we can use the immersion map $\rv$ to see positive $(p,p)$-forms on $\mathfrak h$ as positive $(p,p)$-forms on $V$.
We now want to use $\rv$ to construct positive $(q+1,q+1)$-forms on $V$, starting from positive $(q,q)$-forms on $\mathfrak h$.



\begin{lemma}\label{coneswedgeh}
    For every choice of $\alpha\in\Lambda^{1,0}\setminus\rv\pt{\lambdahh 10}$, for $\cone=\spos{},\hpos{},\wpos{}$, the following inclusion holds
    \begin{equation*}
         i\,\alpha\wedge\bar\alpha\wedge\rv\pt{\mathcal{C}^{q}_\mathfrak h}\subset
    \mathcal{C}^{q+1}\setminus\mathring{\cone}^{q+1},
    \end{equation*}
    for $q=1,\dots,n-2$, and 
    \begin{equation*}
        i\,\alpha\wedge\bar\alpha\wedge\rv\pt{\coneh^{n-1}}=
    \cone^{n},\quad
    i\,\alpha\wedge\bar\alpha\wedge\rv\pt{\sconeh^{n-1}}=
    \scone^{n}.
    \end{equation*}
\end{lemma}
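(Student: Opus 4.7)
The strategy is to handle all three positivity types in parallel via the sign identity
\[
i\,\alpha\wedge\bar\alpha\wedge i^{q^2}\chi\wedge\bar\chi \;=\; i^{(q+1)^2}(\alpha\wedge\chi)\wedge\overline{(\alpha\wedge\chi)},\qquad \chi\in\Lambda^{q,0}V^*,
\]
obtained by commuting $\bar\alpha$ past $\chi$ (factor $(-1)^q$) and checking $(-1)^q\,i^{(q+1)^2-q^2-1}=1$. This makes the operator $i\,\alpha\wedge\bar\alpha\wedge(\,\cdot\,)$ send elementary forms to elementary forms and preserve decomposability. Writing $\eta\in\coneh^q$ as $\eta=i^{q^2}\sum_j\varphi_j\wedge\bar\varphi_j$ with $\varphi_j\in\lambdahh{q}{0}$ (additionally decomposable in the strong case) and applying the identity termwise gives
\[
i\,\alpha\wedge\bar\alpha\wedge\rv\eta \;=\; i^{(q+1)^2}\sum_j(\alpha\wedge\rv\varphi_j)\wedge\overline{(\alpha\wedge\rv\varphi_j)}\in\cone^{q+1},
\]
settling the inclusion in the strong and Hermitian cases.

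For the weak case I would use the restriction characterization in \Cref{wposSsp}. Given any complex subspace $W\subset V$ of dimension $q+1$, let $\pi\colon V\to\mathfrak h$ be the projection along $\C v_0$; a direct check from \eqref{lahk} yields $(\rv\eta)|_W = \pi|_W^{*}(\eta|_{\pi(W)})$, where $\pi(W)\subset\mathfrak h$ has dimension $q$ when $v_0\in W$ and $q+1$ otherwise. In both cases $\eta|_{\pi(W)}$ is weakly positive on $\pi(W)$: a non-negative multiple of $\vol_{\pi(W)}$ in the first case by weak positivity of $\eta$, and a weakly positive $(q,q)$-form on a $(q+1)$-dimensional space in the second, where the bidegree is one below the top and weak, Hermitian and strong positivity coincide. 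Thus $\eta|_{\pi(W)}=i^{q^2}\sum_j\tau_j\wedge\bar\tau_j$ for some decomposable $(q,0)$-forms $\tau_j$, and one further application of the identity gives $(i\,\alpha\wedge\bar\alpha\wedge\rv\eta)|_W=i^{(q+1)^2}\sum_j(\alpha|_W\wedge\pi|_W^{*}\tau_j)\wedge\overline{(\alpha|_W\wedge\pi|_W^{*}\tau_j)}$, a sum of decomposable elementary forms in top degree on $W$, hence a non-negative multiple of $\vol_W$.

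To see that the image misses the interior cones when $q\le n-2$, I would argue by cases. For $\cone=\spos{},\hpos{}$ the decomposition $V^*=\rv(\lambdahh{1}{0})\oplus\C\alpha$ realizes $\alpha\wedge\rv(\lambdahh{q}{0})$ as a subspace of $\Lambda^{q+1,0}V^*$ of dimension $\binom{n-1}{q}<\binom{n}{q+1}$, containing all eigenvectors of $\mu:=i\,\alpha\wedge\bar\alpha\wedge\rv\eta$ associated to non-zero eigenvalues; therefore $\mu$ has strictly less than maximal rank and belongs neither to $\sspos{q+1}$ nor to $\shpos{q+1}$, as both strict cones require maximal rank. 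For $\cone=\wpos{}$, the hyperplane $\ker\alpha\subset V$ has complex dimension $n-1\ge q+1$, so it contains a $(q+1)$-dimensional complex subspace $W$ on which $\alpha|_W=0$, forcing $\mu|_W=0$ and hence $\mu\notin\swpos{q+1}$. For $q=n-1$ both cones reduce to non-negative multiples of the top volume forms on $V$ and on $\mathfrak h$; writing $\vol_\mathfrak h=i^{(n-1)^2}\gamma^{1\cdots(n-1)}\wedge\overline{\gamma^{1\cdots(n-1)}}$ in any basis $\{\gamma^j\}$ of $\lambdahh{1}{0}$ and applying the identity produces $i\,\alpha\wedge\bar\alpha\wedge\rv(\vol_\mathfrak h)=K\,\vol_V$ with $K>0$, since $\alpha\wedge\rv\gamma^{1\cdots(n-1)}$ is a non-zero element of the one-dimensional $\Lambda^{n,0}V^*$ precisely because $\alpha\notin\rv(\lambdahh{1}{0})$; scaling then yields the two claimed equalities of cones and of their interiors.

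The main technical obstacle is the weak case: one must establish the pullback formula $(\rv\eta)|_W = \pi|_W^{*}(\eta|_{\pi(W)})$ uniformly across the two behaviors of $\pi|_W$, and then leverage the collapse of the three positivity notions one dimension below the top to express $\eta|_{\pi(W)}$ as a sum of decomposable elementary forms that can be fed into the identity.
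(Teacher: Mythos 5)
Your argument is correct, but it reaches the conclusion by a genuinely different route than the paper. For the inclusion $i\,\alpha\wedge\bar\alpha\wedge\rv\pt{\coneh^{q}}\subset\cone^{q+1}$ the paper simply chains \Cref{coneshp} with the fact that $i\,\alpha\wedge\bar\alpha$ is a strongly positive $(1,1)$-form, so that wedging with it preserves each of the three cones; your sign identity $i\,\alpha\wedge\bar\alpha\wedge i^{q^2}\chi\wedge\bar\chi=i^{(q+1)^2}(\alpha\wedge\chi)\wedge\overline{(\alpha\wedge\chi)}$ is exactly the computation hiding behind that fact for $\spos{}$ and $\hpos{}$, and your projection formula $(\rv\eta)|_W=\pi|_W^{*}\pt{\eta|_{\pi(W)}}$, split according to whether $v_0\in W$, is a self-contained substitute for it in the weak case, where the collapse of the three notions in degree one below the top is indeed the right ingredient. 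The real divergence is in showing that the image misses the interior: the paper treats all three cones at once by duality, picking $\gamma$ in the dual cone on $\mathfrak h$ and pairing with $\phi_0=i\,\alpha\wedge\bar\alpha\wedge\gamma$, which annihilates everything in the image because $\alpha\wedge\alpha=0$; you instead argue by cases, using the rank bound $\binom{n-1}{q}<\binom{n}{q+1}$ for $\spos{}$ and $\hpos{}$ (legitimate, since $\sspos{q+1}\subseteq\shpos{q+1}$ and strictly Hermitian positive forms have maximal rank, as recorded in \Cref{secPos}) and a $(q+1)$-dimensional subspace of $\ker\alpha$ for $\wpos{}$, which requires exactly $q\le n-2$. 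Your case analysis is more elementary and mirrors the paper's own proof of \Cref{coneshp}, while the paper's duality trick is shorter, uniform in $\cone$, and is the mechanism reused later in \Cref{nonso,cono+conoincono}, so it is worth internalizing; the top-degree equalities are handled essentially identically in both arguments, via \Cref{p0formsh}.
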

For a part of the proof, we will make use of the definition of the interior cones via the dual cones, and we will need the following remark.

\begin{remark}\label{rmkDualCone}
    For $k=1,\dots,n$, consider the dual cones
    \begin{equation*}
        \pt{\mathcal{C}^{k}}\check{}=\begin{cases}
            \wpos{n-k},  &   \mathcal{C}^k=\spos k, \\
            \hpos{n-k},  &   \mathcal{C}^k=\hpos k, \\
            \spos{n-k},  &   \mathcal{C}^k=\wpos k,
        \end{cases}
        \quad\quad
        \pt{\mathcal{C}^{k}_\mathfrak h}\check{}=\begin{cases}
            \wpos{n-1-k}_\mathfrak h,  &   \mathcal{C}^k_\mathfrak h=\spos k_\mathfrak h, \\
            \hpos{n-1-k}_\mathfrak h,  &   \mathcal{C}^k_\mathfrak h=\hpos k_\mathfrak h, \\
            \spos{n-1-k}_\mathfrak h,  &   \mathcal{C}^k_\mathfrak h=\wpos k_\mathfrak h.
        \end{cases}
    \end{equation*}
   By \Cref{coneshp}, we have $\rv\pt{\mathcal{C}^{k-1}_{\mathfrak h}}\check{}\subset\pt{\mathcal{C}^k}\check{}$.
\end{remark}

We can now prove \Cref{coneswedgeh}.

\begin{proof}[Proof of \Cref{coneswedgeh}]
From \Cref{coneshp}, we get
\begin{equation}\label{eqboh}
    i\,\alpha\wedge\bar\alpha\wedge\rv\pt{\mathcal{C}^{q}_\mathfrak h}\subset
    i\,\alpha\wedge\bar\alpha\wedge\mathcal{C}^{q}\subset
    \mathcal{C}^{q+1},
\end{equation}
where the last inclusion holds because $i\,\alpha\wedge\bar\alpha$ is a strongly positive $(1,1)$-form.
To prove that no form in $i\,\alpha\wedge\bar\alpha\wedge\rv\pt{\mathcal{C}^{q}_\mathfrak h}$ can be in the interior of $\cone^{q+1}$, we use \Cref{rmkDualCone}.
 From \eqref{eqboh} we get that, for all $k=1,\dots,n$,
\begin{equation}\label{cone(k)h}
    i\,\alpha\wedge\bar\alpha\wedge\rv\pt{\mathcal{C}^{k}_{\mathfrak h}}\check{}\subset\pt{\mathcal{C}^k}\check{}.
    \end{equation}
Now, for $\psi\in\rv\pt{\coneh^q}$, by duality, $i\,\alpha\wedge\bar\alpha\wedge\psi$ is in $\mathring{\cone}^{q+1}$  if and only if, for all $\phi\in\pt{\mathcal{C}^{q+1}}\check{}$, 
\begin{equation*}
    i\,\alpha\wedge\bar\alpha\wedge\psi\wedge\phi=c\,\vol,
\end{equation*}
for some $c>0$.
However, we can choose $\gamma\in\rv\pt{\mathcal{C}^{q+1}_{\mathfrak h}}\check{}$, so that  $\phi_0=i\,\alpha\wedge\bar\alpha\wedge\gamma\in\pt{\mathcal{C}^{q+1}}\check{}$ by \eqref{cone(k)h} and
\begin{equation*}
    i\,\alpha\wedge\bar\alpha\wedge\psi\wedge\phi_0=
    i\,\alpha\wedge\bar\alpha\wedge\psi\wedge i\,\alpha\wedge\bar\alpha\wedge\gamma=0,
\end{equation*}
proving that $ i\,\alpha\wedge\bar\alpha\wedge\psi$ cannot be in $\mathring{\cone}^{q+1}$.

The last part of the statement is true as \Cref{p0formsh}, for $p=n$, implies in particular that if $\vol_\mathfrak h$ is a volume form on $\mathfrak h$, then 
\begin{equation*}
    \text{Vol}_V=\frac i2\alpha\wedge\bar\alpha\wedge\rv\pt{\text{Vol}_\mathfrak h}
\end{equation*}
is a volume form on $V$.
\end{proof}

We showed two ways to find positive forms on $V$ starting from positive forms on $\mathfrak h$.
However, both of these constructions give forms that cannot be positive in the strict sense.
In the next subsection we will see how on can obtain a relation for the interior cones, combining \Cref{coneshp,coneswedgeh}, together with a construction that goes in the converse sense, from forms on $V$ to suitable restrictions and contractions, interpreted as forms on $\mathfrak h$.

\subsection{Positivity conditions, from \texorpdfstring{$V$}{V} to \texorpdfstring{$\mathfrak h$}{h}}\label{Vtoh}

With the same notation as above, we fix $v_0\in V\setminus \mathfrak h$, $\alpha\in\Lambda^{1,0}\setminus\rv\lambdahh10$ such that $\alpha(v_0)=1$, and let $\Omega$ be a real $(p,p)$-form on $V$.
Define
\begin{equation}\begin{aligned}
    &\Xi\coloneqq\rv\pt{\rest{\Omega}{\mathfrak h}}\in\rv\lambdahh pp,\\
    &\eta\coloneqq\rest{\pt{\iota_{v_0}\Omega}}{\mathfrak h}\in\lambdahh{p-1}p,\\
    &\vartheta\coloneqq\iota_{v_0}\iota_{\bar v_0}\Omega\in\rv \lambdahh{p-1}{p-1},
\end{aligned}
\end{equation}
with $\Xi,\Psi$ real.
Then,
\begin{equation}\label{restOmega}
    \Omega=\Xi+\alpha\wedge\rv\eta+\bar\alpha\wedge\rv\bar\eta+i\alpha\wedge\bar\alpha\wedge\vartheta.
\end{equation}
It is clear that $\rest{\Omega}{\mathfrak h}$ has the same positivity as $\Omega$, and using the results from the previous section, we are able to prove that the same holds for $\rest{\vartheta}{\mathfrak h}$.

\begin{theorem}\label{nonso}
    If $\Omega$ is in $\mathcal C^p$, with $\mathcal{C}^p=\spos p,\hpos p,\wpos p$, then $\rest{\Omega}{\mathfrak h}\in\mathcal{C}_{\mathfrak h}^p$ and $\rest{\vartheta}{\mathfrak h}\in\mathcal{C}_{\mathfrak h}^{p-1}$.
    Similarly, if $\Omega$ is in the interior of $\mathcal{C}^p$, then $\rest{\Omega}{\mathfrak h}$ and $\rest{\vartheta}{\mathfrak h}$ are in the respective interior cones.
\end{theorem}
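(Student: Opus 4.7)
My plan is to treat the two assertions about $\rest{\Omega}{\mathfrak h}$ and $\rest{\vartheta}{\mathfrak h}$ separately, in each case using the characterization of $\mathcal{C}^p$ that is most convenient for that cone. The main ingredients are the decomposition \eqref{restOmega}, the compatibility of $\rho_{v_0}$ with restrictions, contractions, and wedges, and the vanishing of $\rho_{v_0}$ applied to any form on $\mathfrak h$ whose bidegree strictly exceeds $(n-1,n-1)$ in either component.

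The claim for $\rest{\Omega}{\mathfrak h}$ is the shorter one. If $\Omega = i^{p^2}\sum_j \psi_j \wedge \bar\psi_j$ with $\psi_j \in \Lambda^{p,0}V^*$, then $\rest{\Omega}{\mathfrak h} = i^{p^2}\sum_j \rest{\psi_j}{\mathfrak h}\wedge\rest{\bar\psi_j}{\mathfrak h}$; since the restriction of a wedge of $1$-forms is the wedge of their restrictions, decomposability of the $\psi_j$ is preserved under restriction, handling both $\spos{p}$ and $\hpos{p}$. For $\wpos{p}$ I would invoke the geometric characterization of \Cref{wposSsp}: any complex $p$-dimensional subspace $W \subset \mathfrak h$ is also a subspace of $V$, and $\rest{\Omega}{W}$ agrees whether computed from $\Omega$ or from $\rest{\Omega}{\mathfrak h}$, so non-negativity transfers.

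For $\rest{\vartheta}{\mathfrak h}$, the weak and Hermitian cases follow from a single computation. Given any $\psi \in \lambdahh{n-p}{0}$, substituting \eqref{restOmega} into $\Omega \wedge \rv\psi \wedge \rv\bar\psi$, all summands other than the $\vartheta$-one vanish because the factors in $\rho_{v_0}(\cdot)$ combine to a form on $\mathfrak h$ of bidegree exceeding $(n-1,n-1)$: $\Xi \wedge \rv\psi \wedge \rv\bar\psi$ is of bidegree $(n,n)$ on $\mathfrak h$, while $\alpha\wedge\rv\eta\wedge\rv\psi\wedge\rv\bar\psi$ and its conjugate produce bidegrees $(n-1,n)$ and $(n,n-1)$, all of which are zero. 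Thus
\[
\Omega \wedge \rv\psi \wedge \rv\bar\psi = i\alpha\wedge\bar\alpha \wedge \rv\bigl(\rest{\vartheta}{\mathfrak h}\wedge\psi\wedge\bar\psi\bigr),
\]
and by the last part of \Cref{coneswedgeh}, $i\alpha\wedge\bar\alpha\wedge\rv\vol_\mathfrak h$ is a positive multiple of $\vol_V$. Hence non-negativity of $i^{(n-p)^2}\Omega\wedge\rv\psi\wedge\rv\bar\psi$ on $V$ is equivalent to that of $i^{(n-p)^2}\rest{\vartheta}{\mathfrak h}\wedge\psi\wedge\bar\psi$ on $\mathfrak h$; letting $\psi$ range over decomposable (respectively arbitrary) $(n-p,0)$-forms yields the weak (respectively Hermitian) positivity of $\rest{\vartheta}{\mathfrak h}$. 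For the strong-positivity case, the duality approach is unavailable, so I would instead start from $\Omega = i^{p^2}\sum_j \psi_j\wedge\bar\psi_j$ with $\psi_j$ decomposable and use Leibniz together with $\iota_{v_0}\bar\psi_j = 0$ to obtain $\iota_{v_0}\iota_{\bar v_0}(\psi_j\wedge\bar\psi_j) = (-1)^p\, \iota_{v_0}\psi_j \wedge \overline{\iota_{v_0}\psi_j}$; matching the powers of $i$ between $\iota_{v_0}\iota_{\bar v_0}\Omega$ and the coefficient in \eqref{restOmega} then expresses $\rest{\vartheta}{\mathfrak h}$ as a positive combination of elementary strongly positive $(p-1,p-1)$-forms on $\mathfrak h$.

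The main obstacle is the decomposability of each $\iota_{v_0}\psi_j$. Writing $\psi_j = \mu_{j,1}\wedge\cdots\wedge\mu_{j,p}$, the contraction lies in $\Lambda^{p-1}U_j$ with $U_j = \langle\mu_{j,1},\ldots,\mu_{j,p}\rangle$ of dimension $p$; one must show that every element of $\Lambda^{p-1}U_j$ is decomposable as a $(p-1)$-form on $V$. This I would argue as follows: for nonzero $\omega \in \Lambda^{p-1}U_j$, the linear map $\mu \mapsto \mu\wedge\omega$ from $U_j$ to the one-dimensional $\Lambda^p U_j$ has kernel $W$ of dimension exactly $p-1$, hence $\omega \in \Lambda^{p-1}W$, which is itself $1$-dimensional and spanned by any wedge of a basis of $W$. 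For the strict versions of the statement, the weak and Hermitian arguments go through verbatim with strict inequalities; for strong positivity I would invoke the duality characterization of $\ssposh{p-1}$, so that $\rest{\vartheta}{\mathfrak h} \in \ssposh{p-1}$ amounts to $\rest{\vartheta}{\mathfrak h}\wedge\mu > 0$ for every $0 \neq \mu \in \wposh{n-p}$, and by \Cref{coneshp} this reduces to a strict positivity test of $\Omega \in \sspos{p}$ against $\rv\mu \in \wpos{n-p}\setminus\{0\}$, granted by hypothesis.
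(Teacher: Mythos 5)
Your proof is correct, and its core identity is the same as the paper's: substituting \eqref{restOmega} into a wedge against a test form coming from $\mathfrak h$ kills the $\Xi$- and $\eta$-terms for degree reasons, leaving $\Omega\wedge\rv\Psi=i\,\alpha\wedge\bar\alpha\wedge\rv\pt{\rest{\vartheta}{\mathfrak h}\wedge\Psi}$. The difference is organizational. The paper runs this computation once, uniformly for all three cones and their strict versions, by letting $\Psi$ range over the whole dual cone $\pt{\coneh^{p-1}}\check{}$ and invoking \Cref{rmkDualCone} to place $\rv\Psi$ in $\pt{\cone^p}\check{}$; you instead split into cases, using the dual-cone test only for $\wpos{}$ and $\hpos{}$ (against the elementary generators $\rv\psi\wedge\rv\bar\psi$, which is the same thing) and giving a direct, constructive argument for $\spos{}$ via $\iota_{v_0}\iota_{\bar v_0}\pt{\psi_j\wedge\bar\psi_j}=(-1)^p\,\iota_{v_0}\psi_j\wedge\overline{\iota_{v_0}\psi_j}$ plus the (correctly proved) fact that a contraction of a decomposable $p$-form is a decomposable $(p-1)$-form. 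Your strong-positivity route is a genuine alternative with a real payoff: it produces an explicit decomposition of $\rest{\vartheta}{\mathfrak h}$ into decomposable positive forms and avoids the biduality fact $\pt{\wposh{n-p}}\check{}=\sposh{p-1}$ on which the paper's uniform argument implicitly relies; the cost is the extra decomposability lemma and the $i$-power bookkeeping you flag. Two minor points: the generators $\chi_j\wedge\bar\chi_j$ you obtain are decomposable, not merely ``elementary'' (the latter term is reserved in this paper for the Hermitian-positive generators); and your claim that the strict versions ``go through verbatim'' covers $\rest{\vartheta}{\mathfrak h}$ but not really $\rest{\Omega}{\mathfrak h}$ in the strong case, since membership in the interior is not visible from a single decomposition of $\Omega$ --- there you would also need the duality test against $\alpha\wedge\bar\alpha\wedge\rv\mu$ or against $\alpha\wedge\rv\varphi$; the paper treats this part as obvious and does not prove it either, so this is not a substantive gap.
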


\begin{proof}
    Assume $\Omega$ is in $\mathcal{C}^p$ and let us begin proving that $\rest{\vartheta}{\mathfrak h}\in\mathcal{C}^{p-1}_{\mathfrak h}$.
    Of course $\rest{\vartheta}{\mathfrak h}$ is a real form, because $\Omega$ is, so for all $\Psi\in\pt{\mathcal{C}^{p-1}_{\mathfrak h}}\check{}\subset\Lambda^{2\pt{n-p}} _{\mathfrak h}$, we have $\rest{\vartheta}{\mathfrak h}\wedge\Psi=c\vol_{\mathfrak h}$, for some $c\in\R$.  
    We need to show that $c$ is always non-negative.
    Consider 
    \begin{equation}\label{sette}
    i\alpha\wedge\bar\alpha\wedge\rv\pt{\rest{\vartheta}{\mathfrak h}\wedge\Psi}=i\alpha\wedge\bar\alpha\wedge\vartheta\wedge\rv\Psi=\Omega\wedge\rv\Psi,
    \end{equation}
    where the last equality holds by dimensional reasons, as $\rv\eta\wedge\rv\Psi\in\rv\Lambda^{n-1,n}_\mathfrak h=0$ and $\Xi\wedge\rv\Psi\in\rv\Lambda^{n,n}_\mathfrak h=0$.
    By \Cref{rmkDualCone}, $\rv\Psi\in\pt{\cone^p}\check{}$, so  $\Omega\wedge\rv\Psi$ is a non-negative volume form, namely it is a positive multiple of  $\text{Vol}_V=\frac i2\alpha\wedge\bar\alpha\wedge\rv\text{Vol}_\mathfrak h$.
    From \eqref{sette} we then get that
    \begin{equation*}
        i\alpha\wedge\bar\alpha\wedge\rv\pt{c\vol_{\mathfrak h}}=i\alpha\wedge\bar\alpha\wedge\rv\pt{\rest{\vartheta}{\mathfrak h}\wedge\Psi}=\Omega\wedge\rv\Psi
    \end{equation*}
    is a positive multiple of $\vol_V$, meaning that $c\ge0$.
    
    The same argument shows that if $\Omega$ is in the interior of the cone, so is $\rest{\vartheta}{\mathfrak h}$.
\end{proof}

A particular instance of this argument can be found in the proofs of \cite[Prop.~2.5, 3.3]{fm}.

\smallskip
Using the results obtained in \Cref{htoV}, we can in fact prove that  the converse of \Cref{nonso} is also true, when $\eta=0$.
More precisely, if $\eta=0$, the converse of the first part of the statement follows combining \Cref{coneshp,coneswedgeh}, whereas the second part is the following.

\begin{theorem}\label{cono+conoincono}
    For all $v_0\in V$, $\alpha\in\Lambda^{1,0}\setminus\rv\pt{\lambdahh 10}$, and for  $\cone=\spos{},\hpos{},\wpos{}$,  we have
    \begin{equation*}
        \rv\pt{\sconeh^{p}}+i\,\alpha\wedge\bar\alpha\wedge\rv\pt{\sconeh^{p-1}}\subseteq\scone^p,
    \end{equation*}
    $p=1,\dots,n-1$, namely, if $\Xi\in\rv\pt{\sconeh^p}, \Psi\in\rv\pt{\sconeh^{p-1}}$, then $\Xi+i\,\alpha\wedge\bar\alpha\wedge\Psi\in\scone^p$.
\end{theorem}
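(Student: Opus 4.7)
The plan is to exploit the duality characterization of the interior cone: $\Omega \in \scone^p$ if and only if $\Omega \wedge \Phi$ is a strictly positive multiple of $\vol_V$ for every nonzero $\Phi$ in the dual cone $\pt{\cone^p}\check{}$, see \Cref{rmkDualCone}. Setting $\Omega \coloneqq \rv\tilde\Xi + i\,\alpha\wedge\bar\alpha\wedge\rv\tilde\Psi$ for $\tilde\Xi \in \sconeh^p$ and $\tilde\Psi \in \sconeh^{p-1}$, it thus suffices to fix an arbitrary such $\Phi \neq 0$ and verify the strict positivity of the pairing.

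First I would decompose $\Phi$ according to \eqref{restOmega},
\begin{equation*}
    \Phi = \rv\tilde\Phi_0 + \alpha\wedge\rv\mu_\Phi + \bar\alpha\wedge\rv\bar\mu_\Phi + i\,\alpha\wedge\bar\alpha\wedge\rv\tilde\vartheta_\Phi,
\end{equation*}
and invoke \Cref{nonso} on $\Phi$, together with \Cref{rmkDualCone}, to infer $\tilde\Phi_0 \in \pt{\cone^{p-1}_\mathfrak h}\check{}$ and $\tilde\vartheta_\Phi \in \pt{\cone^p_\mathfrak h}\check{}$. Then, expanding $\Omega\wedge\Phi$, every mixed product vanishes either because it contains $\alpha\wedge\alpha$ or $\bar\alpha\wedge\bar\alpha$, or because it produces a form of bidegree exceeding $(n-1,n-1)$ on $\mathfrak h$, so the only surviving contributions give
\begin{equation*}
    \Omega\wedge\Phi = i\,\alpha\wedge\bar\alpha\wedge\rv\bigl(\tilde\Xi\wedge\tilde\vartheta_\Phi + \tilde\Psi\wedge\tilde\Phi_0\bigr).
\end{equation*}
Since $\tilde\Xi$ and $\tilde\Psi$ lie in the respective interior cones on $\mathfrak h$, each of the wedge products $\tilde\Xi\wedge\tilde\vartheta_\Phi$ and $\tilde\Psi\wedge\tilde\Phi_0$ is a non-negative multiple of $\vol_\mathfrak h$, strictly positive whenever the corresponding dual-cone element is nonzero.

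The hard part will be to exclude the degenerate case $\tilde\Phi_0 = 0$ and $\tilde\vartheta_\Phi = 0$ with $\Phi \neq 0$, that is, $\Phi = \alpha\wedge\rv\mu_\Phi + \bar\alpha\wedge\rv\bar\mu_\Phi$ with $\mu_\Phi \neq 0$. To rule this out I would observe that every dual cone is contained in $\spos{n-p}$, so one can write $\Phi = i^{(n-p)^2}\sum_j \psi_j\wedge\bar\psi_j$ with $\psi_j \in \Lambda^{n-p,0}$ decomposable. The hypothesis $\Phi|_\mathfrak h = 0$ then makes a sum of non-negative decomposable forms on $\mathfrak h$ vanish, and since a positive cone contains no nontrivial pair $A,-A$, each $\psi_j|_\mathfrak h$ must be zero; this forces the $1$-forms composing $\psi_j$ to be linearly dependent on $\mathfrak h$, and hence up to a change of basis $\psi_j = c_j\,\alpha\wedge\rv\tilde\beta_j$ for some decomposable $\tilde\beta_j \in \lambdahh{n-p-1}{0}$. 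Substituting back one gets $\Phi = i\,\alpha\wedge\bar\alpha\wedge\rv\Phi''$ for some $\Phi'' \in \sposh{n-p-1}$, and uniqueness of the decomposition \eqref{restOmega} identifies $\Phi''$ with $\tilde\vartheta_\Phi$. The assumption $\tilde\vartheta_\Phi = 0$ therefore forces $\Phi = 0$, giving the sought contradiction and concluding the argument.
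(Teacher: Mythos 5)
Your core argument is the same as the paper's: you dualize, decompose $\Phi$ as in \eqref{restOmega}, use \Cref{nonso} together with \Cref{rmkDualCone} to place $\rest{\Phi}{\mathfrak h}$ and $\iota_{v_0}\iota_{\bar v_0}\Phi$ in the appropriate dual cones on $\mathfrak h$, and reduce everything to the identity $\Omega\wedge\Phi=i\,\alpha\wedge\bar\alpha\wedge\rv\pt{\tilde\Xi\wedge\tilde\vartheta_\Phi+\tilde\Psi\wedge\tilde\Phi_0}$, which is exactly the computation in the paper. You go one step further by observing that strict positivity still requires excluding the degenerate case $\tilde\Phi_0=\tilde\vartheta_\Phi=0$ with $\Phi\neq0$; the paper's proof is silent on this point, so flagging it is a genuine improvement.

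Your treatment of that degenerate case, however, has a gap. You assert that every dual cone $\pt{\cone^{p}}\check{}$ is contained in $\spos{n-p}$, so that $\Phi$ can be written as $i^{(n-p)^2}\sum_j\psi_j\wedge\bar\psi_j$ with $\psi_j$ decomposable. With the paper's definitions, $\spos{n-p}$ (the cone generated by decomposable forms) is the \emph{smallest} of the three cones, and the correct chain is $\spos{n-p}\subseteq\hpos{n-p}\subseteq\wpos{n-p}$ (the displayed inclusion in \Cref{secPos} is mistyped, which may be the source of the confusion); only for $\cone=\wpos{}$ is the dual cone equal to $\spos{n-p}$. When $\cone=\spos{}$ the relevant dual cone is $\wpos{n-p}$, whose elements are in general not sums of elementary forms at all (the paper's own example $\Omega_a$ with $1<\abs a\le2$ is weakly positive with a negative eigenvalue), so your decomposition is unavailable and the argument breaks down precisely in that case. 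For $\cone=\hpos{}$ it survives with a small modification, since elements of $\hpos{n-p}$ are sums $\sum_j i^{(n-p)^2}\psi_j\wedge\bar\psi_j$ with $\psi_j$ arbitrary, and your observation that $\rest{\psi_j}{\mathfrak h}=0$ forces $\psi_j$ to be divisible by the annihilator of $\mathfrak h$ does not actually use decomposability. A uniform repair is available: if $\rest{\Phi}{\mathfrak h}=0$ and $\iota_{v_0}\iota_{\bar v_0}\Phi=0$, then $\Phi=\alpha_0\wedge\rv\mu+\bar\alpha_0\wedge\rv\bar\mu$ with $\alpha_0$ the annihilator of $\mathfrak h$, and a degree count on $\mathfrak h$ shows $\Phi\wedge\omega^{p}=0$ for a Hermitian form $\omega$ adapted to the splitting $V=\mathfrak h\oplus\C v_0$; since $\omega^{p}$ lies in the interior of $\spos{p}$, hence of $\cone^{p}$ for every choice of $\cone$, pairing a nonzero element of the dual cone with it must be strictly positive, forcing $\Phi=0$.
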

\begin{proof}
    Let $\Xi\in\rv\pt{\sconeh^p}, \Psi\in\rv\pt{\sconeh^{p-1}}$.
    We know that $\Theta\coloneqq\Xi+i\,\alpha\wedge\bar\alpha\wedge\Psi\in\cone^p$, by \Cref{coneswedgeh}.
    To prove that $\Theta$ is in fact in $\scone^p$, we will prove that $\Theta\wedge\phi>0$, for all $0\neq\phi\in\pt{\cone^{p}}\check{}$.
    Writing such a $\phi$ as in \eqref{restOmega}, 
    \begin{equation*}
        \phi=\rv\pt{\rest{\phi}{\mathfrak h}}+\alpha\wedge\rv\eta+\bar\alpha\wedge\bar\rv\eta+i\,\alpha\wedge\bar\alpha\wedge\xi,
    \end{equation*}
    with $\rv\eta\in\rv\lambdahh{n-p-1}{n-p}, \xi\in\rv\lambdahh{n-p-1}{n-p-1}   $, by \Cref{rmkDualCone} and \Cref{nonso} we know that 
    \begin{equation*}
        \rest{\phi}{\mathfrak h}\in\pt{\coneh^{p-1}}\check{},\quad
        \rest{\xi}{\mathfrak h}\in\pt{\coneh^{p}}\check{}.
    \end{equation*}
    This is enough to conclude that
    \begin{equation*}
        \Theta\wedge\phi=i\,\alpha\wedge\bar\alpha\wedge\pt{\Xi\wedge\xi+\Psi\wedge\rest{\phi}{\mathfrak h}}>0.
    \end{equation*}
\end{proof}

We conclude this section with a remark on the \textit{diagonalization} of Hermitian positive forms obtained via this construction.
We denote the dimensions of the spaces of $(p,0)$-forms as
\begin{equation*}
    N^p\coloneqq\dim\Lambda^{p,0}=\binom{n}{p},\quad N^p_\mathfrak h\coloneqq\dim\lambdahh p0=\binom{n-1}{p}.
\end{equation*}

\begin{remark}\label{p0formsh}
    Let $p=1,\dots,n$.
    Then, for any fixed $\alpha\in\Lambda^{1,0}\setminus\rv\lambdahh 10$, we have 
    \begin{equation*}
        \Lambda^{p,0}=\rv\lambdahh p0\oplus\alpha\wedge\rv\lambdahh{p-1}0.
    \end{equation*}
    More precisely, if $\pg{\varphi_1,\dots,\varphi_{N^p_\mathfrak h}}$ is a basis of $\lambdahh p0$, and $\pg{\phi_1,\dots,\phi_{N^{p-1}_\mathfrak h}}$ is a basis of $\lambdahh {p-1}0$, then 
\begin{equation*}
    \pg{\rv\varphi_1,\dots,\rv\varphi_{N^p_\mathfrak h},\alpha\wedge\rv\phi_1,\dots,\alpha\wedge\rv\phi_{N^{p-1}_\mathfrak h}}
\end{equation*}
is a basis of $\Lambda^{p,0}$, because it is clearly a linearly independent set, and 
    \begin{equation*}
         N^p_\mathfrak h+N^{p-1}_\mathfrak h=\binom{n-1}{p}+\binom{n-1}{p-1}=\binom{n}{p}=\dim\Lambda^{p,0}.
    \end{equation*}
\end{remark}

\section{\texorpdfstring{$(2,2)$}{(2,2)}-forms on \texorpdfstring{$\C^4$}{C4}}\label{wp224}

This is the first non-trivial case, as it is the lowest dimension case where one can find examples of weakly positive forms that are not Hermitian positive and (by duality) Hermitian positive forms that are not strongly positive.
In this case, for a fixed $4$-dimensional vector space $V$ over $\C$, the image through the Pl\"ucker embedding of the Grassmannian $\mathbb{G}_2(V)$ in $\mathbb P(\Lambda^2 V) \cong \mathbb P^6$ is identified by one equation.
Without loss of generality, we regard $V$ as $\C^4$.
Following the notation of the previous sections, we denote $\Lambda^{p,q}=\Lambda^{p,q}\pt{\C^4}^*$, for $p,q=1,\dots,4$.
Fix a basis $\omega^1,\dots,\omega^4$ of $\Lambda^{1,0}$, and consider the volume form $\vol=\omega^{1234\bar1\bar2\bar3\bar4}$ on $\C^4$.
Then, a basis of $\Lambda^{2,0}$ is $\pg{\varphi^1,\dots,\varphi^6}$, defined by
\begin{equation}\label{base20}\begin{aligned}
   & \varphi^1=\omega^{12},\quad\varphi^2=\omega^{13},\quad\varphi^3=\omega^{14},\\
   & \varphi^4=\omega^{23},\quad\varphi^5=-\omega^{24},\quad\varphi^6=\omega^{34}.
\end{aligned}
\end{equation}
This particular basis satisfies 
\begin{equation*}
\varphi^j\wedge\varphi^k\wedge\varphi^{\bar j}\wedge\varphi^{\bar k}=
\begin{cases}
    \text{Vol}& (j,k)=(1,6),(2,5),(3,4),\\
    0   &   \text{otherwise}.
\end{cases}
\end{equation*}

\subsection{Criteria for weak positivity}\label{secWpos}

Once we fix the basis \eqref{base20}, we find the following criterion to determine when a $(2,0)$-form on $V$ is decomposable, using \Cref{decomPlucker} (cf. \cite{BlockiPlis}).

\begin{proposition}
    The $(2,0)$-form $\psi=\sum_{j=1}^6c_j\varphi^j$ is decomposable if and only if 
    $$
    c_1c_6+c_2c_5+c_3c_4=0.
    $$
\end{proposition}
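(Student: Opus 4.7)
The plan is to reduce the decomposability condition to the vanishing of $\psi\wedge\psi$, which is a single scalar equation once written in coordinates. Recall the classical fact: a $(2,0)$-form $\psi$ on a vector space is decomposable if and only if $\psi\wedge\psi=0$. The forward direction is immediate, since $\psi=\mu_1\wedge\mu_2$ forces $\psi\wedge\psi=\mu_1\wedge\mu_2\wedge\mu_1\wedge\mu_2=0$ by anticommutativity of $(1,0)$-forms. The backward direction, while classical, is really the content of \Cref{decomPlucker} applied to $\mathbb{G}_2(\C^4)\hookrightarrow\mathbb{P}(\Lambda^{2,0})\cong \mathbb{P}^5$: the image is cut out by the unique quadratic Plücker relation, and $\psi\wedge\psi$ provides exactly such a quadratic invariant. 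So I would simply invoke this classical result and reduce the proposition to computing $\psi\wedge\psi$ in the basis \eqref{base20}.

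The main step is then a direct expansion: with $\psi=\sum_{j=1}^6 c_j\varphi^j$, we have
\begin{equation*}
\psi\wedge\psi=2\sum_{1\le j<k\le 6}c_jc_k\,\varphi^j\wedge\varphi^k,
\end{equation*}
and $\varphi^j\wedge\varphi^k$ is a $(4,0)$-form on $\C^4$, hence a scalar multiple of $\omega^{1234}$, which is nonzero precisely when the index pair $(j,k)$ is chosen so that the four indices $\{1,2,3,4\}$ all appear. Inspecting the basis \eqref{base20}, this happens only for the three complementary pairs $(1,6),(2,5),(3,4)$, and the sign convention in \eqref{base20} (in particular the minus sign on $\varphi^5=-\omega^{24}$) is exactly designed to make the three coefficients come out equal: one checks $\varphi^1\wedge\varphi^6=\varphi^2\wedge\varphi^5=\varphi^3\wedge\varphi^4=\omega^{1234}$.

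Putting it together yields
\begin{equation*}
\psi\wedge\psi=2\,(c_1c_6+c_2c_5+c_3c_4)\,\omega^{1234},
\end{equation*}
so $\psi\wedge\psi=0$ if and only if $c_1c_6+c_2c_5+c_3c_4=0$. Combined with the equivalence between decomposability and $\psi\wedge\psi=0$ recalled above, this proves the proposition. The only point to be careful about is tracking signs in the wedge products of the reordered basis \eqref{base20}; the classical Plücker statement itself requires no proof here, as it is the content of \Cref{decomPlucker} in dimension four.
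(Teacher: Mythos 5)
Your proof is correct and follows the same route the paper intends: the proposition is stated there without proof, as a direct application of the Pl\"ucker relation for $\mathbb{G}_2(\C^4)$ from \Cref{decomPlucker}, which in this dimension is exactly the condition $\psi\wedge\psi=0$ that you use. Your explicit expansion, including the sign check showing $\varphi^1\wedge\varphi^6=\varphi^2\wedge\varphi^5=\varphi^3\wedge\varphi^4=\omega^{1234}$ (which is precisely what the minus sign in $\varphi^5=-\omega^{24}$ is for), correctly fills in the computation the paper leaves implicit.
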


Therefore, in light of \Cref{posQ}, we obtain a criterion for weak positivity (cf. \cite[Thm.~3]{BlockiPlis}) and strict weak positivity.

\begin{proposition}\label{wposA}
 A real $(2,2)$-form $\Theta$ is weakly positive, respectively strictly weakly positive, if and only if $Q_\Theta$ is positive semi-definite, respectively positive definite, on
    \begin{equation*}
        \mathcal{P}l=\pg{z_1z_6+z_2z_5+z_3z_4=0}\subset\C^6\cong \Lambda^{2,0}.
    \end{equation*}
\end{proposition}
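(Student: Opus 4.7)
The plan is to chain together two characterizations already at our disposal: the description of (strict) weak positivity in terms of the Hermitian form $Q_\Theta$ restricted to decomposable forms (items i and ii of \Cref{posQ}), and the Plücker characterization of decomposable $(2,0)$-forms stated in the immediately preceding proposition.

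First, I would specialize \Cref{posQ}(i)--(ii) to $n=4$, $p=2$. Since $n-p=2$, the ``complementary-degree'' forms used to test weak positivity are themselves $(2,0)$-forms. Thus $\Theta$ is weakly positive (respectively strictly weakly positive) if and only if $Q_\Theta(\psi,\psi)\ge 0$ for every decomposable $\psi\in\Lambda^{2,0}$ (respectively $>0$ for every non-zero such $\psi$). This is the crucial self-dual feature of the case $n=4$, $p=2$ that makes the statement clean: weak positivity is tested against decomposable forms of the very same space on which the coordinates $(c_1,\dots,c_6)$ of the preceding proposition live.

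Next, I would use the fixed basis \eqref{base20} to identify $\Lambda^{2,0}\cong \C^6$ via $\psi=\sum_{j=1}^6 c_j\varphi^j\mapsto (c_1,\dots,c_6)$. Under this identification, the proposition immediately preceding the statement says precisely that the set of decomposable $(2,0)$-forms is the affine quadric
\begin{equation*}
\mathcal{P}l=\{(z_1,\dots,z_6)\in\C^6 : z_1z_6+z_2z_5+z_3z_4=0\}.
\end{equation*}
Substituting this description into the condition obtained in the first step yields exactly the statement of \Cref{wposA}, for both the semi-definite (weak positivity) and definite (strict weak positivity) cases.

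There is no real obstacle here: both ingredients have been set up in the preceding material, and the proof is a matter of matching indices. The only bookkeeping point worth emphasising in the write-up is that the identification $\C^6\cong\Lambda^{2,0}$ is made with respect to the specific basis \eqref{base20}, so that the quadric form $z_1z_6+z_2z_5+z_3z_4$ matches the anti-diagonal pairing $\varphi^j\wedge\varphi^k\wedge\varphi^{\bar j}\wedge\varphi^{\bar k}=\vol$ for $(j,k)\in\{(1,6),(2,5),(3,4)\}$ singled out just before the statement.
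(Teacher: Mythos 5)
Your argument is correct and is exactly the route the paper takes: the statement is presented there as an immediate consequence of items i and ii of \Cref{posQ} combined with the Pl\"ucker characterization of decomposable $(2,0)$-forms in the preceding proposition, which is precisely the chain you describe. Your added remark that $n-p=2$ makes the test forms live in the same $\Lambda^{2,0}\cong\C^6$ is a useful clarification, but the substance matches the paper.
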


Here, and from now on, the identification $\Lambda^{2,0}\cong\C^6$ is intended with respect to the basis $\pg{\varphi^1,\dots,\varphi^6}$ defined above.

\begin{notation}
    Any real $(2,2)$-form $\Theta$ is identified by the Hermitian matrix $A_\Theta$ associated to $Q_\Theta$ in the basis $\pg{\varphi^6,\dots,\varphi^1}$.
    Note that, if $A_\Theta=(a_{j,k})_{j,k=1}^6$, then  
    \begin{equation}\label{defTheta}
    \Theta=\sum_{1\leq j, k\leq6}a_{j,k}\,\varphi^j\wedge\varphi^{\bar k},
    \end{equation}
    where $\pg{\varphi^j\wedge\varphi^{\bar k}}_{j,k=1}^6$ is the corresponding basis of $\Lambda^{2,2}$.
\end{notation}

It follows from the characterizations above that if $\Theta$ is weakly positive, respectively strictly weakly positive, then for any $j<k$ with $j+k\neq7$, the submatrix 
\begin{equation*}
A_{j,k}=
\begin{pmatrix}
    a_{j,j} & a_{j,k}   \\
    \bar a_{j,k} & a_{k,k}       
\end{pmatrix}\end{equation*}
 of $A_\Theta$ is positive semi-definite, respectively positive definite.
To check this, let $v\in\C^6$ be the coordinates vector of $\psi=z_j\varphi^{7-j}+z_k\varphi^{7-k}$.
Since $j+k\neq7$, we have $v\in\mathcal{P}l$.
Then,
    \begin{equation*}\label{mandorlina}\begin{aligned}        
        0 &\le  \frac{\Theta\wedge\psi\wedge\bar\psi}{\vol} 
        = Q_\Theta\pt{\psi,\psi} 
        =  v\,A_\Theta\,\overline{v}^t\\
        &= (z_j,z_k)\begin{pmatrix}
    a_{j,j} & a_{j,k}   \\
    \bar a_{j,k} & a_{k,k}       
\end{pmatrix}
\begin{pmatrix}
    \bar z_j   \\ \bar z_k    
\end{pmatrix}.
    \end{aligned}    \end{equation*}
This shows how the elements outside the secondary diagonal of $A_\Theta$ behave similarly to the case of Hermitian positivity.
In particular, this gives bounds on such elements.

\begin{proposition}\label{+diag}
Let $\Theta\in\wpos 2$.
The diagonal elements of $A_\Theta$ are non-negative.
Furthermore, for all $j\neq k$ and $j+k\neq 7$, we have
\begin{equation}\label{nondiag}
    \abs{a_{j,k}}\le\frac{a_{j,j}+a_{k,k}}{2}\le\frac{\tr A_\Theta}{2}.
\end{equation}
Analogously, if $\Theta\in\swpos 2$, the strict inequalities hold.
\end{proposition}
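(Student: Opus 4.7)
The plan is to leverage \Cref{wposA} together with the positive semi-definiteness of the $2 \times 2$ submatrices $A_{j,k}$ (for $j+k \ne 7$) that the preceding discussion in this subsection has already established. With those tools in hand, the statement reduces to bookkeeping, and no part of the argument should require heavy computation.

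First, I would prove the non-negativity of the diagonal by evaluating $Q_\Theta$ on the single basis vector $\psi = \varphi^{7-j}$. Its coordinate vector in $\C^6 \cong \Lambda^{2,0}$ has only one non-zero entry (at position $7-j$), so all three products appearing in $z_1 z_6 + z_2 z_5 + z_3 z_4$ involve a factor equal to zero; thus $\psi \in \mathcal{P}l$. By \Cref{wposA}, weak positivity of $\Theta$ gives
\[
a_{j,j} = Q_\Theta\pt{\varphi^{7-j},\varphi^{7-j}} \ge 0,
\]
with strict inequality when $\Theta \in \swpos 2$.

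Next, fix $j<k$ with $j+k \ne 7$. The excerpt has already observed that the submatrix $A_{j,k}$ is positive semi-definite, respectively positive definite in the strict case. From $\det A_{j,k} \ge 0$ one obtains $a_{j,j} a_{k,k} \ge \abs{a_{j,k}}^2$, and the AM-GM inequality gives
\[
\abs{a_{j,k}} \le \sqrt{a_{j,j} a_{k,k}} \le \frac{a_{j,j}+a_{k,k}}{2}.
\]
The strict version follows verbatim with strict inequalities throughout. The second inequality $(a_{j,j}+a_{k,k})/2 \le \tr A_\Theta/2$ is then a direct consequence of the first step, since the four remaining diagonal entries are non-negative (strictly positive in the strict case).

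There is no real obstacle here — the only point worth flagging is the essential role of the hypothesis $j+k\ne 7$: when $j+k=7$, the two-variable test $(z_j \varphi^{7-j}+z_k\varphi^{7-k})$ falls outside $\mathcal{P}l$ because exactly the pair product $z_j z_k$ appears in the Plücker relation, so one cannot extract positivity of the corresponding $2\times 2$ submatrix by this direct route. This is precisely the delicate case treated separately by Theorem~B (\Cref{3antidiag}), and it justifies why the current proposition restricts to the off-anti-diagonal entries.
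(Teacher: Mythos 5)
Your proposal is correct and follows essentially the same route as the paper: the non-negativity of the diagonal is the trivial evaluation of $Q_\Theta$ on the (decomposable, hence Pl\"ucker) basis vectors, and the bound \eqref{nondiag} comes from the positive semi-definiteness of the $2\times2$ submatrices $A_{j,k}$ combined with $\abs{a_{j,k}}\le\sqrt{a_{j,j}a_{k,k}}\le\frac{a_{j,j}+a_{k,k}}{2}$, exactly as in the paper's argument $a_{j,j}+a_{k,k}-2\abs{a_{j,k}}\ge\pt{\sqrt{a_{j,j}}-\sqrt{a_{k,k}}}^2\ge0$. Your closing remark on why $j+k=7$ must be excluded is a useful observation that the paper leaves implicit.
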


\begin{proof}
    The first part of the statement is trivial.
    To prove \eqref{nondiag}, notice that if $A_{j,k}$ is positive semi-definite, then
    \begin{equation*}
        a_{j,j}+a_{k,k} -2\abs{a_{j,k}}
        \ge a_{j,j}+a_{k,k} -2\sqrt{a_{j,j}a_{k,k}}
        =\pt{\sqrt{a_{j,j}}-\sqrt{a_{k,k}}}^2\ge0.
    \end{equation*}
    Moreover, if $\Theta$ is strictly weakly positive, the first inequality is strict.
\end{proof}

This suggests that weakly positive forms that are not Hermitian positive can be identified via conditions on the secondary diagonal.

\begin{theorem}\label{3antidiag}
    Let $\Theta$ be a real $(2,2)$-form on $\C^4$, and $A_\Theta=(a_{j,k})$ its associated Hermitian matrix with respect to the basis $\pg{\varphi^1,\dots,\varphi^6}$.
    If $\Theta$ is weakly positive, then for all $(j,k)\neq(l,m)$ such that $j+k=l+m=7$,
   \begin{equation}\label{wposboh}
        \abs{a_{j,k}\pm a_{l,m}}\le \frac{a_{j,j}+a_{k,k}+a_{l,l}+a_{m,m}}2. 
    \end{equation}
    In particular, $a_{1,6},a_{2,5},a_{3,4}$ have norm less than or equal to $\displaystyle\frac{\tr A_\Theta}2$.
\end{theorem}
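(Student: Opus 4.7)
By Proposition~\ref{wposA}, the weak positivity of $\Theta$ is equivalent to $\sum_{j,k} a_{j,k}\,w_j\bar w_k \geq 0$ for every $w\in\mathbb{C}^6$ satisfying the Plücker relation $w_1 w_6 + w_2 w_5 + w_3 w_4 = 0$. Unlike the off-anti-diagonal entries treated in Proposition~\ref{+diag}, the entries $a_{j,k}$ with $j+k=7$ cannot be controlled by $2\times 2$ principal submatrices: no Plücker vector is supported on a single anti-diagonal pair $\{j,7-j\}$ with both entries nonzero. I therefore plan to test $Q_\Theta$ against vectors supported on four indices $\{j,k,l,m\}$ with $j+k=l+m=7$, where the Plücker equation simplifies to $w_j w_k + w_l w_m = 0$.

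For each unit-modulus $z = e^{i\phi}$ and each sign $\varepsilon\in\{+,-\}$, the idea is to construct a pair of test vectors $w,w'$ on this restricted quadric with the following properties: (i) $|w_i| = |w'_i| = 1$ for $i\in\{j,k,l,m\}$, so the diagonal part of $Q_\Theta$ contributes $S := a_{j,j}+a_{k,k}+a_{l,l}+a_{m,m}$ in each case; (ii) $w_j \bar w_k = w'_j \bar w'_k = z^{-1}$ and $w_l \bar w_m = w'_l \bar w'_m = \varepsilon\, z^{-1}$, isolating the combination $a_{j,k} + \varepsilon\, a_{l,m}$; (iii) the four remaining cross products $w_j\bar w_l, w_j\bar w_m, w_k\bar w_l, w_k\bar w_m$ are negated when passing from $w$ to $w'$. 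Adding $Q_\Theta(w,w) + Q_\Theta(w',w') \geq 0$ will then cancel the off-anti-diagonal contributions and leave
\[
Q_\Theta(w,w) + Q_\Theta(w',w') = 2S + 4\,\Re\!\left[z^{-1}\bigl(a_{j,k} + \varepsilon\, a_{l,m}\bigr)\right] \geq 0.
\]
Since this holds for every $z$ on the unit circle, choosing $z^{-1}$ aligned with $-(a_{j,k}+\varepsilon\, a_{l,m})$ yields $|a_{j,k} + \varepsilon\, a_{l,m}| \leq S/2$.

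Concretely, for $(j,k,l,m) = (1,6,2,5)$ and $\varepsilon = +$, the pair
\[
w = (1, i, 0, 0, iz, z), \qquad w' = (1, -i, 0, 0, -iz, z)
\]
does the job: the Plücker relation holds on both because $(\pm i)(\pm i) = -1$ cancels $z$ against itself, and a short calculation of the six Hermitian cross products confirms (ii) and (iii). For $\varepsilon = -$ the analogous pair is $w = (1,1,0,0,-z,z)$ and $w' = (1,-1,0,0,z,z)$. The cases with $(l,m) = (3,4)$ are handled identically after relabeling. Finally, the triangle inequality gives $2|a_{j,7-j}| \leq |a_{j,7-j} + a_{l,7-l}| + |a_{j,7-j} - a_{l,7-l}| \leq S$, and $S \leq \tr A_\Theta$ by the non-negativity of diagonal entries established in Proposition~\ref{+diag}, which proves the last assertion.

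The main obstacle is identifying the right test vectors. A single four-coordinate Plücker vector mixes the target anti-diagonal term with up to four unrelated off-diagonal entries, and the naive real $\pm 1$ ansatz does not even satisfy Plücker. The key trick is to decorate two of the four active coordinates with $\pm i$: this both forces the Plücker equation to hold (exploiting $i^2 = -1$) and provides exactly the sign freedom needed so that flipping $i \to -i$ on both decorated positions, to obtain $w'$, negates precisely the unwanted cross products while preserving the two anti-diagonal contributions. Once this symmetry is found, the remaining verification is routine bookkeeping.
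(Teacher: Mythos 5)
Your proof is correct and follows essentially the same strategy as the paper's: pairs of test vectors in $\mathcal{P}l$ supported on the four indices $\{j,k,l,m\}$, summed so that the four unwanted cross terms cancel and only the diagonal and the two anti-diagonal entries survive (your explicit vectors do satisfy the Plücker relation and the required sign pattern, and the paper uses very similar pairs such as $(\pm1,1,0,0,z,\mp z)$ and $(\pm i,1,0,0,z,\pm iz)$). The only substantive difference is in the final optimization: you restrict to $\abs{z}=1$ and optimize the phase, which yields exactly the arithmetic-mean bound $\abs{a_{j,k}\pm a_{l,m}}\le S/2$ of the statement, whereas the paper lets $z$ range over all of $\C$ and runs a two-variable discriminant argument, obtaining the sharper intermediate inequality $\abs{a_{j,k}\pm a_{l,m}}^2\le(a_{j,j}+a_{k,k})(a_{l,l}+a_{m,m})$ before weakening it by AM--GM; that sharper form is what \Cref{rmkprefe} refers to, so your shortcut proves the theorem as stated but not the refinement recorded in that remark.
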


\begin{remark}\label{rmktr/2}
    We have just seen that each non-diagonal entry $a_{j,k}$ of the matrix $A_\Theta$ associated to a weakly positive $(2,2)$-form $\Theta$ satisfies
    \begin{equation}\label{tr/2}
    \abs{a_{j,k}}\le\frac{\tr A_\Theta}{2}.
    \end{equation}
    However, as we will see in the next section, the converse does not hold.
\end{remark}

\begin{proof}[Proof of \Cref{3antidiag}]
We use the criterion of \Cref{wposA} for weak positivity in terms of the bilinear form $Q$ on $\Lambda^{2,0}\simeq\C^6$.
In this case, we have, for $(z_1,\dots,z_6)\in\C^6$,
\begin{equation*}
    Q(z_1,\dots,z_6)=\sum_{j=1}^6\pt{a_{j,j}\abs{z_j}^2+2\sum_{k=j+1}^6\real\pt{a_{j,k}\bar z_jz_k}}.
\end{equation*}
If $\Theta$ is weakly positive, then $Q(z_1,\dots,z_6)\ge0$, for all $(z_1,\dots,z_6)\in\mathcal{P}l$, namely such that $z_1z_6+z_2z_5+z_3z_4=0$.

Set $t=a_{1,1}+a_{2,2}+a_{5,5}+a_{6,6}$.
We will now prove that $\abs{w}\le \frac{t}{2}$, with $w=a_{2,5}-a_{1,6}$.
Let us consider
\begin{equation*}
    (-1,1,0,0,z,z),(1,1,0,0,z,-z)\in\mathcal{P}l,
\end{equation*}
for all $z\in\C$.
Then, $Q_0(z)\coloneqq Q(1,1,0,0,z,-z)+Q(-1,1,0,0,z,z)$ should be non-negative, for $z=x+iy\in\C$, for all $x,y\in\R$.
We have
\begin{equation*}\begin{aligned}    
    Q_0(z)&=2\pt{\abs{z}^2(a_{5,5}+a_{6,6})+2\real\pt{z\,{w}}+a_{1,1}+a_{2,2}}\\
    &=2\pt{\pt{x^2+y^2}(a_{5,5}+a_{6,6})+2x\real{{w}}-2y\imm{{w}}+a_{1,1}+a_{2,2}} \ge0.
\end{aligned}\end{equation*}
We see this as a second degree inequality in the real variable $x$, so we know that it is satisfied for all $x\in\R$ if and only if its discriminant $\Delta_{x}$ is always non-positive, with
\begin{equation*}
    \frac{\Delta_{x}}4=\pt{\real{w}}^2-(a_{5,5}+a_{6,6})\pt{a_{1,1}+a_{2,2}-2y\imm w+y^2(a_{5,5}+a_{6,6})}\le 0.
\end{equation*}
This last line is now a second degree inequality in $y$, equivalent to 
\begin{equation*}
    \frac{\Delta_{y}}4=(a_{5,5}+a_{6,6})^2\pt{\pt{\imm w}^2+{\pt{\real{w}}^2-(a_{5,5}+a_{6,6})(a_{1,1}+a_{2,2})}}
    \le0,
\end{equation*}
namely $\abs{w}^2\le(a_{5,5}+a_{6,6})(a_{1,1}+a_{2,2})$.
Now, consider $t_0=a_{1,1}+a_{2,2}$, so that $a_{5,5}+a_{6,6}=t-t_0$.
We have 
$$
\abs{w}^2\le (t-t_0)t_0= -\pt{t_0-\frac{t}{2}}^2+\frac{t^2}{4}\le \frac{t^2}{4},
$$
so that $\abs{w}\le\frac{t}{2}$.
In other words,
$$
\abs{a_{1,6}-a_{2,5}}\le\frac{a_{1,1}+a_{2,2}+a_{5,5}+a_{6,6}}{2}.
$$
Similarly, $\tilde Q_0(z) = Q(i,1,0,0,z,i\,z)+Q(-i,1,0,0,z,-i\,z)\ge0$, for all $z\in\C$, with
\begin{equation*}
     \tilde Q_0(z)=2\pt{\abs{z}^2(a_{5,5}+a_{6,6})+2\real\pt{z\,\pt{a_{2,5}+a_{1,6}}}+a_{1,1}+a_{2,2}},
\end{equation*}
ultimately yielding
$$
\abs{a_{1,6}+a_{2,5}}\le\frac{a_{1,1}+a_{2,2}+a_{5,5}+a_{6,6}}{2}.
$$

Considering $Q(\pm1,0,1,z,0,\mp z)$ and $Q(\pm i,0,1,z,0,\pm i\,z)$ we get
\begin{equation*}
    \abs{a_{1,6} \pm a_{3,4}}\le\frac{a_{1,1}+a_{3,3}+a_{4,4}+a_{6,6}}{2} .
\end{equation*}
Similarly, considering $Q(0,\pm 1,1,z,\mp z,0)$ and $Q(0,\pm i,1,z,\pm i\,z,0)$ we get
\begin{equation*}
    \abs{a_{2,5} \pm a_{3,4}}\le\frac{a_{2,2}+a_{3,3}+a_{4,4}+a_{5,5}}{2}
\end{equation*}
thus the remaining cases are proved.

The last part of the statement is a trivial consequence of the inequalities we just proved, as \eqref{wposboh} yields
\begin{equation*}\begin{aligned}    
\abs{a_{j,k}}\le\frac{a_{j,j}+a_{k,k}+a_{l,l}+a_{m,m}}2+a_{l,m},\\
\abs{a_{j,k}}\le\frac{a_{j,j}+a_{k,k}+a_{l,l}+a_{m,m}}2-a_{l,m},
\end{aligned}\end{equation*}
and from this it follows the inequality~\eqref{gigettino}, namely 
\begin{equation*}
\abs{a_{j,k}}\le\frac{a_{j,j}+a_{k,k}+a_{l,l}+a_{m,m}}2.
\end{equation*}
\end{proof}

\begin{remark}\label{rmkprefe}
    In the last line of the proof, we actually obtained a bound on the norm of the $a_{j,k}$ with $j+k=7$, which is better then half the trace.
    Note that if $\Theta$ is also Hermitian positive, the bound \eqref{gigettino} can be easily improved to \eqref{nondiag}.
    However, as we will see later, this is not the case for $\Theta\in\wpos 2\setminus\hpos 2$.
\end{remark}

\begin{corollary}\label{tr>0}
    For every non-zero weakly positive $(2,2)$-form $\Theta$, the diagonal elements of $A_\Theta$ are not all zero.
In particular,
\begin{equation*}
    \tr A_\Theta>0.
\end{equation*}
\end{corollary}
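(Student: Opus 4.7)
The plan is to argue by contradiction: assume that all diagonal entries of $A_\Theta$ vanish and then deduce, via the two inequalities proved earlier in the section, that every entry of $A_\Theta$ must vanish, forcing $\Theta=0$.

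More precisely, suppose $a_{j,j}=0$ for all $j=1,\dots,6$. First I would invoke \Cref{+diag}: for any pair $(j,k)$ with $j\neq k$ and $j+k\neq 7$, the inequality
\begin{equation*}
|a_{j,k}|\le\frac{a_{j,j}+a_{k,k}}2=0
\end{equation*}
forces $a_{j,k}=0$. This kills every off-diagonal entry of $A_\Theta$ except possibly those on the anti-diagonal, namely $a_{1,6}$, $a_{2,5}$, $a_{3,4}$ (together with their conjugates).

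For these remaining three entries I would apply the sharp bound \eqref{gigettino} of \Cref{3antidiag}: picking, for each anti-diagonal position $(j,k)$ with $j+k=7$, any other anti-diagonal index pair $(l,m)$ with $l+m=7$ and $(l,m)\neq(j,k)$, we get
\begin{equation*}
|a_{j,k}|\le\frac{a_{j,j}+a_{k,k}+a_{l,l}+a_{m,m}}2=0,
\end{equation*}
so the anti-diagonal entries vanish as well. Consequently $A_\Theta=0$, and since $\Theta$ is determined by $A_\Theta$ via \eqref{defTheta}, this yields $\Theta=0$, contradicting the hypothesis.

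The second statement follows at once from the first: by \Cref{+diag} each diagonal entry $a_{j,j}$ is non-negative, and we just showed they cannot all be zero, so $\tr A_\Theta=\sum_{j=1}^6 a_{j,j}>0$. I do not anticipate any genuine obstacle here, as the corollary is essentially a direct combinatorial consequence of the two positivity bounds already established; the only point worth being careful about is ensuring, when applying \eqref{gigettino}, that the auxiliary pair $(l,m)$ with $l+m=7$ and $(l,m)\neq(j,k)$ indeed exists, which is immediate since there are three such pairs.
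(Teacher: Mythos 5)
Your proof is correct and follows essentially the same route as the paper: the paper's one-line argument invokes the consolidated bound \eqref{tr/2} (which is exactly the combination of \eqref{nondiag} for $j+k\neq7$ and the ``in particular'' clause of \Cref{3antidiag} for the anti-diagonal) to conclude that null trace forces $A_\Theta=0$. You have simply unpacked that bound into its two constituent cases, which is the same argument in slightly more explicit form.
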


\begin{proof}
    This is a straightforward consequence of \eqref{tr/2}, forcing $A_\Theta$ to be the zero matrix if it has null trace. 
\end{proof}

\Cref{+diag} can be generalized to $3\times3$ submatrices of $A_\Theta$, not intersecting the secondary diagonal, namely 
\begin{equation*}
    A_{j,k,l}=\begin{pmatrix}
        a_{j,j} &   a_{j,k} &   a_{j,l} \\
        a_{k,j} &   a_{k,k} &   a_{k,l} \\
        a_{l,j} &   a_{l,k} &   a_{l,l} \\
    \end{pmatrix},
\end{equation*}
for all $j<k<l$ such that $(j,k),(k,l),(j,l)\neq(1,6),(2,5),(3,4)$.
Using \Cref{Vtoh}, we can associate such matrices to forms of bidegree $(1,1)$ and $(2,2)$ on a complex subspace of $\C^4$ of  dimension $3$.
More precisely, we can prove the following.

\begin{proposition}\label{3x3subm}
    Let $\Theta$ be a  weakly positive $(2,2)$-form on $\C^4$.
    Then, for all $j<k<l$ such that $(j,k),(k,l),(j,l)\neq(1,6),(2,5),(3,4)$, the matrix $A_{j,k,l}$ is positive semi-definite.
    Furthermore, if $\Theta$ is strictly weakly positive, then $A_{j,k,l}$ has to be positive definite.
\end{proposition}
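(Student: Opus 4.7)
My plan is to exploit the dimensional reduction of \Cref{Vtoh}, identifying each admissible $3\times 3$ submatrix $A_{j,k,l}$ with the Hermitian matrix of either a $(1,1)$-form or a $(2,2)$-form on a suitable hyperplane $\mathfrak h_a=\ker\omega^a\subset\C^4$, and then using the fact that on a $3$-dimensional space all three notions of positivity coincide for these bidegrees. The combinatorial point is that the hypothesis ``no two of $j,k,l$ sum to $7$'' forces the corresponding $(2,0)$-basis elements $\varphi^{7-j},\varphi^{7-k},\varphi^{7-l}$, viewed as $2$-subsets of $\{1,2,3,4\}$, either to share a common index $a$ (the \emph{sunflower} case at $a$) or to be the three $2$-subsets of $\{1,2,3,4\}\setminus\{a\}$ (the \emph{triangle} case avoiding $a$); all eight admissible triples arise this way, one sunflower and one triangle per choice of $a\in\{1,2,3,4\}$.

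In the triangle case the restrictions $\varphi^{7-m}|_{\mathfrak h_a}$ give a basis of $\Lambda^{2,0}_{\mathfrak h_a}$ and the relevant entries $a_{j,k}$ are the coefficients of $\Theta|_{\mathfrak h_a}\in\Lambda^{2,2}_{\mathfrak h_a}$, so $A_{j,k,l}$ coincides (up to reordering of the basis) with the Hermitian matrix of $Q_{\Theta|_{\mathfrak h_a}}$. In the sunflower case, choosing $\alpha=\omega^a$ in the decomposition~\eqref{restOmega} and writing $\varphi^j=\omega^a\wedge\tilde\varphi^j$ whenever $\omega^a$ is a factor of $\varphi^j$, the only $a_{j,k}$ entering the piece $i\,\omega^a\wedge\bar\omega^a\wedge\vartheta$ are those with $\varphi^j$ and $\varphi^k$ both containing $\omega^a$. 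A direct rearrangement then identifies $A_{j,k,l}$ with the Hermitian matrix of the real $(1,1)$-form $\vartheta|_{\mathfrak h_a}$ in the basis $\{\tilde\varphi^m\}$ of $\Lambda^{1,0}_{\mathfrak h_a}$, up to the twist $\varphi^5=-\omega^{24}$, which merely rescales one basis vector by $-1$.

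With these identifications, \Cref{nonso} applied to the weakly positive $\Theta$ yields that $\Theta|_{\mathfrak h_a}$ and $\vartheta|_{\mathfrak h_a}$ are weakly positive $(2,2)$- and $(1,1)$-forms on $\mathfrak h_a$, respectively. Since $\dim_{\C}\mathfrak h_a=3$, weak positivity on $\mathfrak h_a$ coincides with Hermitian positivity both for $(1,1)$-forms (trivially) and for $(2,2)=(n-1,n-1)$-forms (by duality); hence by \Cref{posQ} the associated Hermitian matrices, and therefore $A_{j,k,l}$, are positive semi-definite. The strict case is identical after invoking the second assertion of \Cref{nonso}: strict weak positivity of $\Theta$ descends to strict weak positivity on $\mathfrak h_a$, which in dimension $3$ is strict Hermitian positivity, giving $A_{j,k,l}$ positive definite. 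The main obstacle is the sign and reordering bookkeeping in the sunflower case, due to $\varphi^5=-\omega^{24}$ and the reversed-basis convention of \eqref{defTheta}; however, permuting or rescaling a basis of $\Lambda^{1,0}_{\mathfrak h_a}$ by real unit scalars preserves both semi-definiteness and definiteness, so the conclusion is unaffected.
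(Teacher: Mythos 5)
Your proposal is correct and follows essentially the same route as the paper's proof: decompose $\Theta$ with respect to each coordinate hyperplane $\mathfrak h_a$ as in \eqref{restOmega}, identify the eight admissible $3\times 3$ submatrices with the Hermitian matrices of the restricted $(2,2)$-form $\Theta|_{\mathfrak h_a}$ and of the contracted $(1,1)$-form $\vartheta_a$, invoke \Cref{nonso}, and conclude using that on a $3$-dimensional space all positivity notions coincide for these bidegrees. One bookkeeping caveat: your sunflower/triangle labels (taken on the complementary forms $\varphi^{7-j},\varphi^{7-k},\varphi^{7-l}$) end up attached to the wrong reduction --- for instance $A_{1,2,3}$, whose entries are the coefficients of $\varphi^j\wedge\varphi^{\bar k}$ with $j,k\in\{1,2,3\}$, all divisible by $\omega^1$, is the matrix of the $(1,1)$-form $\vartheta_1$, while $A_{4,5,6}$ is the matrix of $\Theta|_{\mathfrak h_1}$, i.e.\ the two cases should be swapped relative to what you wrote. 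Since complementation permutes the admissible triples and both reductions are carried out correctly, this slip does not affect the validity of the argument.
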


\begin{proof}
    Let $v_j=\pt{\omega^j}^\sharp\in\C^4$, for $j=1,\dots,4$ .
    We can define $\mathfrak h_1=\text{span}\pg{v_2,v_3,v_4},$
    and $\alpha\coloneqq\omega^1$ is an element in $\Lambda^{1,0}\setminus\rho_{v_1}\lamhh110$ with $\alpha(v_1)=1$, so we can write $\Theta$ as in \eqref{restOmega}:
\begin{equation}\label{restTheta}
    \Theta=\Xi_1+\alpha\wedge\rho_{v_1}\eta_1+\bar\alpha\wedge\rho_{v_1}\bar\eta_1+i\alpha\wedge\bar\alpha\wedge\bar\vartheta_1,
\end{equation}
with $\Xi_1\in\rho_{v_1}\lamhh122,\eta_1\in\lamhh112,\vartheta_1\in\rho_{v_1}\lamhh111$
Comparing \eqref{restTheta} with \eqref{defTheta}, we can see that $A_{4,5,6}$ is the matrix associated to $\Xi_1$, in the basis $\pg{\varphi^4,\varphi^5,\varphi^6}$ of $\lamhh120
$.
Furthermore, 
\begin{equation*}
    \vartheta_1=i\,\sum_{j,k=1}^3\,a_{j,k}\,\omega^{j+1}\wedge\omega^{\overline{k+1}}\in\lamhh111
\end{equation*} 
has associated matrix $A_{1,2,3}$.
The analogous construction, with  $\alpha\coloneqq\omega^j$, and ${\mathfrak h}_j=\text{span}\pg{v_1,\dots,\hat v_j,\dots,v_4}$, for $j=2,3,4$, gives $\Xi_j\in\lamhh j22,\vartheta_j\in\lamhh j11$, with associated matrices
\begin{equation*}\begin{aligned}
    &A_{\Xi_2}=A_{2,3,6},\quad  A_{\Xi_3}=A_{1,3,5},\quad  A_{\Xi_4}=A_{1,2,4},\\
    &A_{\vartheta_2}=A_{1,4,5},\quad A_{\vartheta_3}=A_{2,4,6},\quad A_{\vartheta_4}=A_{3,5,6}.
\end{aligned}
\end{equation*}
By \Cref{nonso}, if $\Theta$ is weakly positive, then so are $\Xi_j$ and $\vartheta_j$, for all $j=1,\dots,4$, but for all such forms, this is equivalent to be Hermitian positive, and similarly for strict weak positivity, so this completes the proof.
\end{proof}

Note that this can also be proved considering the points in $\C^6$ with $z_j,z_k,z_l\neq0$, and the remaining $z_r$ all null, which are in $\mathcal{P}l$ for the values of $j,k,l$ as in the statement.

\subsection{Applications for strong positivity}\label{secSpos}
We will now use what we proved in \Cref{secWpos} to study strong positivity on a family of $(2,2)$-forms, comparing it with the other positivity notions.
Let
    $$
    \Omega_a=\displaystyle\sum_{1\leq l\leq6}\varphi^l\wedge\varphi^{\bar l}+a\,\varphi^j\wedge\varphi^{\bar k}+\bar a\,\varphi^k\wedge\varphi^{\bar l},
    $$
for $a\in\C$, with $j<k$ fixed.
Note that we can write 
\begin{equation}\label{questa}
    \Omega_a=\sum_{l\neq j,k}\varphi^l\wedge\varphi^{\bar l}+\pt{\varphi^j+\bar a\,\varphi^k}\wedge{\pt{\varphi^{\bar j}+ a\,\varphi^{\bar k}}}+\pt{1-\abs a^2}\varphi^k\wedge\varphi^{\bar k}.
\end{equation}
If $\abs a\le1$, this is a convex sum of Hermitian positive forms, thus Hermitian positive.
Furthermore, it is clear that $\Omega_0$ is strictly strongly positive.

\begin{lemma}\label{HSb}
    For the family $\pg{\Omega_a}_{a\in\C}$, the following are equivalent.
    \begin{enumerate}[label=\roman*.]
        \item\label{hsb1} $\Omega_a$ is Hermitian positive
        \item\label{hsb2} $\Omega_a$ is strongly positive
        \item\label{hsb3}  $\abs a\leq1$.
    \end{enumerate} 
    The analogous holds for strict positivity, with $\abs a<1$.
\end{lemma}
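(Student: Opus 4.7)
The plan is to close the three-way equivalence along the cycle (ii) $\Rightarrow$ (i) $\Leftrightarrow$ (iii) $\Rightarrow$ (ii), and the analogous cycle for the strict versions. The first implication is immediate from $\spos 2\subseteq\hpos 2$.

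For (i) $\Leftrightarrow$ (iii), I would read off from \eqref{defTheta} the Hermitian matrix $A_{\Omega_a}$: all diagonal entries equal $1$, the entries at positions $(j,k)$ and $(k,j)$ are $a$ and $\bar a$ respectively, and every other entry vanishes. After a permutation of rows and columns bringing the $j$th and $k$th first, $A_{\Omega_a}$ splits as a direct sum of $\bigl(\begin{smallmatrix}1 & a\\ \bar a & 1\end{smallmatrix}\bigr)$ and $I_4$, so its eigenvalues are $1\pm|a|$ each with multiplicity one and $1$ with multiplicity four. Hence $A_{\Omega_a}$ is positive semi-definite (respectively positive definite) iff $|a|\le 1$ (respectively $|a|<1$), which by \Cref{posQ}\,(iii--iv) is exactly the (strict) Hermitian positivity of $\Omega_a$.

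For (iii) $\Rightarrow$ (ii), I would argue by duality, using that in $\C^4$ the cones $\spos 2$ and $\wpos 2$ are mutually dual. For any $\Psi=\sum_{r,s}b_{rs}\,\varphi^r\wedge\varphi^{\bar s}\in\wpos 2$, the only non-zero products $\varphi^p\wedge\varphi^{\bar q}\wedge\varphi^r\wedge\varphi^{\bar s}$ are those with $r=7-p$ and $s=7-q$, each equal to $\vol$, and a direct expansion yields
\begin{equation*}
    \Omega_a\wedge\Psi = \bigl(\tr(B)+2\real(a\,b_{7-j,\,7-k})\bigr)\vol.
\end{equation*}
The key step is then the bound $|b_{7-j,7-k}|\le\tr(B)/2$. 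If $j+k\ne 7$, the entry lies off the anti-diagonal of $B$ and \Cref{+diag} gives $|b_{7-j,7-k}|\le(b_{7-j,7-j}+b_{7-k,7-k})/2\le\tr(B)/2$. If $j+k=7$, the entry lies on the anti-diagonal and the required estimate is precisely \eqref{tr/2} of \Cref{rmktr/2}, i.e.\ the substantial content of \Cref{3antidiag}. Combined with $|a|\le 1$ this gives $|2\real(a\,b_{7-j,7-k})|\le|a|\,\tr(B)\le\tr(B)$, so $\Omega_a\wedge\Psi\ge 0$ for every $\Psi\in\wpos 2$, and duality yields $\Omega_a\in\spos 2$. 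The strict version proceeds identically: \Cref{tr>0} guarantees $\tr(B)>0$ for any non-zero $\Psi\in\wpos 2$, so for $|a|<1$ the inequality sharpens to $\tr(B)+2\real(a\,b_{7-j,7-k})\ge(1-|a|)\tr(B)>0$, placing $\Omega_a$ in the interior of $\spos 2$.

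I expect the main obstacle to be the case $j+k=7$. When $j+k\ne 7$, the decomposition \eqref{questa} already exhibits $\Omega_a$ as a sum, with non-negative coefficients, of wedges $\psi\wedge\bar\psi$ with $\psi$ decomposable, since the middle factor $\varphi^j+\bar a\,\varphi^k$ has only two non-zero coordinates that are not paired by the Plücker relation $c_1c_6+c_2c_5+c_3c_4=0$ and is therefore automatically decomposable. When $j+k=7$, instead, this same middle factor violates the Plücker relation whenever $a\ne 0$, so \eqref{questa} only witnesses Hermitian positivity, and finding an explicit ``diagonal'' expression of $\Omega_a$ as a sum of $\psi\wedge\bar\psi$ with decomposable $\psi$ is genuinely delicate. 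This is exactly the situation in which the non-trivial anti-diagonal inequality of \Cref{3antidiag} is indispensable, and the duality approach above has the advantage of avoiding the search for such a decomposition altogether.
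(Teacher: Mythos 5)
Your proof is correct and follows essentially the same route as the paper's: the equivalence of Hermitian positivity with $\abs a\le1$ via the explicit $6\times6$ matrix (eigenvalues in your case, Sylvester's criterion in the paper's), and strong positivity via duality against $\wpos 2$ using the trace bound $\abs{b_{7-j,7-k}}\le\tr(B)/2$ from \Cref{rmktr/2}, with \Cref{tr>0} handling the strict case. The only cosmetic difference is that you split the trace bound into the anti-diagonal and off-anti-diagonal cases, whereas the paper invokes the remark uniformly.
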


\begin{proof}
    Note that \eqref{hsb1} $\Rightarrow$ \eqref{hsb3} follows by Sylvester's criterion, as the principal minors of $A_{\Omega_a}$ are all either $1$ or $1-\abs a^2$.
    
    We now prove that \eqref{hsb3} $\Rightarrow$ \eqref{hsb2}.    
    We show that if $\abs a\le1$, then the wedge product of $\Omega_a$ with any weakly positive $(2,2)$-form is always non-negative.
    Let $\Theta$ be such a form, and consider $A_\Theta=(a_{l,m})$.
    Then,
    \begin{equation}\label{sivaapranzo}
        \frac{\Omega_a\wedge\Theta}{\vol}=\tr A_\Theta+2\real\pt{a\,\bar a_{j,k}}
        \ge\tr A_\Theta-2\abs a\abs{a_{j,k}}
        \ge \tr A_\Theta -2\abs{a_{j,k}},
    \end{equation}
    where the last inequality holds because $\abs a\le1$.
    The last quantity is non-negative by \Cref{rmktr/2}.

    The same arguments hold for the strict positivity notions.
    The only non-trivial part is to prove that $\Omega_a\wedge\Theta>0$ if $\abs{a}<1$, for every choice of $\Theta\neq0$.
    We have two cases.
    If $a_{j,k}\neq0$, the last inequality in \eqref{sivaapranzo} is strict.
    Otherwise, the thesis follows because $\tr A_\Theta>0$, by \Cref{tr>0}. 
\end{proof}

We will now consider two cases, depending on the choice of $j$ and $k$ in the definition of $\Omega_a$.
More precisely, we will show that the main difference is given by $a$ being in the secondary diagonal or not, highlighting what we anticipated in \Cref{rmkprefe}.
This will ultimately allow us to build examples of weakly positive forms which are not Hermitian positive.

\begin{proposition}\label{wposb}
If $j+k\neq7$, then all the conditions in \Cref{HSb} are equivalent to $\Omega_a$ being weakly positive, respectively strictly weakly positive.
\end{proposition}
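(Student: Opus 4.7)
The plan is to derive this as a short corollary of Lemma \ref{HSb} combined with Proposition \ref{+diag}, invoking only the standard inclusions $\spos{2}\subset\hpos{2}\subset\wpos{2}$ (and their strict analogues).

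First, I would recall from Lemma \ref{HSb} that the implications $(|a|\le 1)\Rightarrow(\Omega_a\in\spos{2})\Rightarrow(\Omega_a\in\hpos{2})$ have already been established, and similarly in the strict case with $|a|<1$. Since $\hpos{2}\subset\wpos{2}$ and $\shpos{2}\subset\swpos{2}$, this immediately furnishes three of the four directions needed to close the equivalence chain. Thus the entire proposition reduces to a single implication: if $\Omega_a$ is weakly positive (respectively strictly weakly positive) and $j+k\neq 7$, then $|a|\le 1$ (respectively $|a|<1$).

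For this remaining direction, I would look at the Hermitian matrix $A_{\Omega_a}$ associated to $\Omega_a$ in the basis $\{\varphi^1,\dots,\varphi^6\}$. By the definition of $\Omega_a$, its diagonal entries are all equal to $1$, and the entry in position $(j,k)$ is precisely $a$ (with $\bar a$ in the symmetric position, and all other off-diagonal entries zero). The hypothesis $j+k\neq 7$ is exactly what places $a$ off the secondary diagonal of $A_{\Omega_a}$, so Proposition \ref{+diag} applies and yields
\begin{equation*}
    |a|\le \frac{a_{j,j}+a_{k,k}}{2}=\frac{1+1}{2}=1,
\end{equation*}
with strict inequality if $\Omega_a$ is strictly weakly positive. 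This closes the loop.

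I do not anticipate a real obstacle here: the only subtle point is to make sure the indexing convention matches, i.e.\ that the condition $j+k\neq 7$ in the statement of the proposition corresponds to the off-secondary-diagonal condition in Proposition \ref{+diag}, which is guaranteed by the chosen basis \eqref{base20}. The contrast with the case $j+k=7$, where Proposition \ref{+diag} does not apply and instead only the weaker bound \eqref{gigettino} (with four diagonal terms) holds, is precisely what makes that case genuinely different and motivates the separate treatment carried out in Proposition \ref{alphaa}.
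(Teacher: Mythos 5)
Your proposal is correct and follows essentially the same route as the paper: the paper likewise observes that only the implication ``weakly positive $\Rightarrow |a|\le 1$'' (and its strict analogue) needs proving, and obtains it as a particular case of \Cref{+diag}, exactly as you do with the $2\times2$ submatrix bound applied to the unit diagonal entries. Your extra remarks on the indexing convention and the contrast with the $j+k=7$ case are accurate but not needed beyond what the paper records.
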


\begin{proof}
    It is enough to show that if $\Omega_a$ is weakly positive, respectively  strictly weakly positive, then $\abs{a}\le1$, respectively $\abs a<1$.
    This is a particular case of \Cref{+diag}.
\end{proof}

On the other hand, if $j+k=7$, $\Omega_a$ is the family $\alpha_a$ introduced in \cite[Prop.~4]{BlockiPlis}, where the weak positivity was studied.
Our aim is to extend this result to the remaining types of positivity.
Using \Cref{HSb}, we obtain this in the following.

\begin{theorem}\label{alphaa}
If $j+k = 7$, then
    \begin{enumerate}[label=\arabic*.]
    \item\label{bp} $\Omega_a$ is weakly positive if and only if $\abs a\leq2$.
    \item\label{alphaatransv} $\Omega_a$ is strictly weakly positive if and only if $\abs a<2$.
    \item\label{gigi} The following are equivalent.
        \begin{enumerate}[label=\roman*.]
        \item\label{gigi1} $\Omega_a$ is Hermitian positive;
        \item\label{gigi2} $\Omega_a$ is strongly positive;
        \item\label{gigi3} $\abs a\leq1$.
        \end{enumerate}
    \item\label{gigis} The following are equivalent.
        \begin{enumerate}[label=\roman*.]
        \item\label{gigis1} $\Omega_a$ is strictly Hermitian positive;
        \item\label{gigis2} $\Omega_a$ is strictly strongly positive;
        \item\label{gigis3} $\abs a<1$.
        \end{enumerate}
    \end{enumerate}
\end{theorem}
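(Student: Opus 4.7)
The plan is to build on the results already established. Parts (3) and (4) are instances of \Cref{HSb} applied to the family $\{\Omega_a\}_{a \in \C}$: the proof of \Cref{HSb} never uses whether $j+k = 7$, so the equivalences between Hermitian positivity, strong positivity, and the bound $|a| \le 1$ (respectively its strict version) are already at hand. Part (1) is \cite[Prop.~4]{BlockiPlis} for the sufficiency $|a|\le 2 \Rightarrow \Omega_a \in \wpos 2$; for the necessity, I apply \Cref{3antidiag} to $A_{\Omega_a}$, whose diagonal entries are all equal to $1$ and whose only nonzero off-diagonal entry is the anti-diagonal one $a_{j,k}=a$: the bound \eqref{gigettino} then reads $|a| \le (1+1+1+1)/2=2$.

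The core of the argument is part (2). For the sufficiency direction, suppose $|a|<2$ and set $t = |a|/2 \in [0,1)$ and $a' = 2a/|a|$, with the convention that $a'$ is an arbitrary complex number of modulus $2$ when $a=0$ (in which case $t=0$ and the decomposition is trivial). Then
\begin{equation*}
\Omega_a = (1-t)\,\Omega_0 + t\,\Omega_{a'},
\end{equation*}
with $\Omega_{a'} \in \wpos 2$ by part (1) and $\Omega_0 = \sum_{l=1}^{6} \varphi^l \wedge \varphi^{\bar l}$ strictly Hermitian positive, hence in $\swpos 2$. Since $\swpos 2$ is the interior of the convex cone $\wpos 2$, any convex combination of a point in the interior with a point in the cone with coefficient $t<1$ stays in the interior; thus $\Omega_a \in \swpos 2$.

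The main obstacle is the necessity in part (2), namely ruling out the boundary case $|a|=2$. A clean continuity argument handles it: along any ray $a=re^{i\theta}$, part (1) gives $\Omega_{re^{i\theta}} \in \wpos 2$ for $r \le 2$ and $\Omega_{re^{i\theta}} \notin \wpos 2$ for $r>2$, so $\Omega_{2e^{i\theta}}$ lies on the boundary of the closed convex cone $\wpos 2$ and cannot belong to its interior $\swpos 2$. As a concrete cross-check, for $j=3$, $k=4$, $a=2$, the element $\psi = \varphi^1 + \varphi^6 + \varphi^3 - \varphi^4$ lies in $\mathcal{P}l$ and satisfies $Q_{\Omega_2}(\psi,\psi)=0$; a phase rotation of $\psi$ provides an isotropic direction for every $|a|=2$ and every admissible $j<k$ with $j+k=7$, reconfirming that $\Omega_a$ falls out of $\swpos 2$ at the boundary.
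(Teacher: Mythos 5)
Your proposal is correct, and parts \ref{bp}, \ref{gigi}, \ref{gigis} are handled essentially as in the paper (the paper cites \cite[Prop.~4]{BlockiPlis} for both directions of \ref{bp}, whereas you derive the necessity from \Cref{3antidiag}; either is fine since \Cref{3antidiag} is proved independently). The genuine difference is in part \ref{alphaatransv}. The paper argues entirely inside the quadratic form: it retraces the Błocki--Pliś chain of inequalities to get $\bar z A z^t\ge 2\abs{z_1z_6}(2-\abs a)$ on $\mathcal{P}l$, exhibits explicit vectors $w\in\mathcal{P}l$ with $\bar wAw^t=2\abs a(2-\abs a)$ to rule out $\abs a=2$, and then analyzes the equality cases of the chain to show that $\bar zAz^t=0$ forces $z=0$ when $\abs a<2$. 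You instead use soft convex geometry: the identity $\Omega_a=(1-t)\Omega_0+t\,\Omega_{a'}$ with $t=\abs a/2$ and $\abs{a'}=2$, combined with the standard fact that a convex combination (with $t<1$) of an interior point and a point of a convex cone lies in the interior, gives sufficiency; and openness of $\swpos2$ together with part \ref{bp} rules out $\abs a=2$ by continuity of $a\mapsto\Omega_a$. Both routes are sound (your decomposition checks out, $\Omega_0\in\swpos2$ since $Q_{\Omega_0}$ is the identity, and your isotropic vector $(1,0,1,-1,0,1)$ indeed lies on $\mathcal{P}l$ and annihilates $Q_{\Omega_2}$). What your approach buys is brevity and a complete black-boxing of part \ref{bp}; what the paper's computation buys is an explicit description of the degenerate directions of $Q_{\Omega_a}$ on $\mathcal{P}l$ at $\abs a=2$, which the authors note is reused elsewhere (in \cite{LT}) and which also illustrates the sharpness of the bound \eqref{gigettino}.
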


\begin{proof}
    Without loss of generality, up to reordering the basis, we can assume $j=1$, $k=6$.
    \Cref{bp} is then \cite[Prop.~4]{BlockiPlis}.

    From now on, let 
    \begin{equation*}
        A\coloneqq A_{\Omega_a}=\begin{pmatrix}
            1&0&0&0&0&a\\
            0&1&0&0&0&0\\
            0&0&1&0&0&0\\
            0&0&0&1&0&0\\
            0&0&0&0&1&0\\
            \bar a&0&0&0&0&1\\
        \end{pmatrix}.
    \end{equation*}    
    Retracing the steps of the proof of \cite[Prop.~4]{BlockiPlis}, we find that for $z=(z_1,\dots,z_6)\in\mathcal{P}l$, we have
    \begin{equation}\label{bplis}\begin{aligned}
        \bar zAz^t=\abs{z}^2+2\text{Re}\pt{a\bar z_1z_6}
        &\geq 2\abs{z_1z_6}+2\abs{z_2z_5}+2\abs{z_3z_4}+2\text{Re}\pt{a\bar z_1z_6}  \\
        &\geq 2\abs{z_1z_6}+2\abs{z_2z_5+z_3z_4}+2\text{Re}\pt{a\bar z_1z_6}  \\
        &= 2\pt{2\abs{z_1z_6}+\text{Re}\pt{a\bar z_1z_6}}  \\
        &\geq2\pt{2\abs{z_1z_6}-\abs{a\bar z_1z_6}}  \\
        &=2\abs{z_1z_6}\pt{2-\abs{a}}  \geq0.
    \end{aligned}
    \end{equation}
    Furthermore, for $w\in\mathcal{P}l\cap\pg{\bar z_1z_6=-\bar a,\abs{z_1}=\abs{z_6}}$, this reduces to
    \begin{equation*}
        \bar w A w^t=2\abs{a}\pt{2-\abs a}.
    \end{equation*}
    A straightforward consequence is that if $\Omega_a$ is strictly weakly positive, then $\abs a<2$.
    
    For the viceversa, we only have to prove that the restriction of $Q_{\Omega_a}$ to $\mathcal{P}l$ is non-degenerate, when $\abs a<2$, namely that if $\bar zAz^t=0$ for some $z\in\mathcal{P}l$, then $z=0$.
    To see this, we note that the inequalities in \eqref{bplis} are in fact equalities, respectively, if and only if 
    \begin{equation*}\begin{aligned}
        &\abs{z_1}=\abs{z_6},\quad\abs{z_2}=\abs{z_5},\quad\abs{z_3}=\abs{z_4},\\
        &{z_2z_5}=\lambda{z_3z_4},\quad\lambda\in\C,\\
        &a{\bar z_1z_6}\in\R_{\leq0},\\
        &\abs{z_1z_6}=0.
    \end{aligned}
    \end{equation*}
    Now the first line,  together with the last one, gives $z_1=z_6=0$, so that
    $$
    0=\bar zAz^t=\abs{z}^2,
    $$
    and $z=0$, as wanted.

    \Cref{gigi,gigis} are the content of \Cref{HSb}.
\end{proof}

A part of this proof was used in \cite{LT}, to prove that a given $(3,3)$-form on a compact holomorphically parallelizable manifold is $3$-K\"ahler.

\smallskip
We summarize the statement of \Cref{alphaa} in the following table. 
\medskip

\renewcommand{\arraystretch}{1.7} 
\begin{center}
\begin{tabular}{|c|c|c|c|c|c|c|}
\hline
$\Omega_a\in$     & $\wpos 2$ & $\swpos 2$ & $\hpos 2$ & $\shpos 2$ & $\spos 2$ & $\sspos 2$ \\ \hline
$\abs{a}<1$   & \tic      & \tic       & \tic      & \tic       & \tic      & \tic       \\ \hline
$\abs{a}=1$   & \tic      & \tic       & \tic      & \no        & \tic      & \no        \\ \hline
$1<\abs{a}<2$ & \tic      & \tic       & \no       & \no        & \no       & \no        \\ \hline
$\abs{a}=2$   & \tic      & \no        & \no       & \no        & \no       & \no        \\ \hline
$\abs{a}>2$   & \no       & \no        & \no       & \no        & \no       & \no        \\ \hline
\end{tabular}
\end{center}

\smallskip
\begin{remark}
    Let us consider the case  $0<\abs a \le 1$.
    Observe that $\varphi^j+\bar a\varphi^k$, with $j+k=7$, are non-decomposable $(2,0)$-forms, and thus
    \[
    \Psi_a\coloneqq\pt{\varphi^j+\bar a\varphi^k}\wedge\pt{\varphi^{\bar j}+a\varphi^{\bar k}}
    \]
    are examples of $\rk1$ Hermitian positive forms that are not strongly positive, cf. \cite[Prop. 1.5]{hk}.
    Also, we noted in \eqref{questa} that we can write $\Omega_a$ as the sum of $\Psi_a$ and a strongly positive part.
    A priori, this could suggest $\Omega_a$ to be candidates for Hermitian positive forms which are not strongly positive.
    However, since we proved that $\Omega_a$ are strongly positive, this could be a starting point to get more information on the geometry of the cones $\spos{2},\hpos 2$ and on the reciprocal positions of the strongly positive summand with respect to $\Psi_a$.
\end{remark}

We conclude noting that \Cref{bp} in \Cref{alphaa} provides a counterexample of the viceversa of \Cref{rmktr/2}.
Indeed, any form $\Omega_a$ with $2<\abs a\le 3$ satisfies 
\begin{equation*}
    \abs{a_{j,k}}\le\tr A/2,\quad \text{for all }j\neq k,
\end{equation*}
but is not weakly positive.
Moreover, the weakly positive forms $\Omega_a$ with $\abs a = 2$ proves that the upper bound~\eqref{gigettino} is sharp, as anticipated in \Cref{rmkprefe}.
\bigskip

{\bf Acknowledgments.}
The second author would like to thank Anna Fino for the constant support during the PhD supervision.
The authors would like to warmly thank Simone Diverio for useful comments.

\bibliographystyle{plain}
\bibliography{positivityFM}
\end{document}